\newcommand*{\rom}[1]{\expandafter\@slowromancap\romannumeral #1@}
\newcommand{\Proof}[1]{\begin{proof} #1 \end{proof}}
\newtheorem{thm}{Theorem}[section]
\newtheorem*{theorem*}{Theorem}
\newtheorem*{question*}{Question}
\newcommand{\Thm}[1]{\begin{thm} #1 \end{thm}}
\newtheorem{pro}[thm]{Proposition}
\newcommand{\Pro}[1]{\begin{pro} #1\end{pro}}
\newtheorem{cor}[thm]{Corollary}
\newcommand{\Cor}[1]{\begin{cor} #1 \end{cor}}
\newtheorem{lem}[thm]{Lemma}
\newcommand{\Lem}[1]{\begin{lem} #1 \end{lem}}
\newtheorem{defn}[thm]{Definition}
\newtheorem{rem}[thm]{Remark}
\newcommand{\Rem}[1]{\begin{rem} #1 \end{rem}}
\newtheorem{ex}[thm]{Example}
\newtheorem{obs}[thm]{Observation}
\newtheorem{note}[thm]{Note}
\newtheorem{nota}[thm]{Notation}
\newcommand{\catS}{\mathbf{S}}
\newcommand{\catU}{\mathbf{U}}
\newcommand{\catK}{\mathbf{K}}
\newcommand{\catE}{\mathbf{E}}
\newcommand{\tightoverset}[2]{%
\mathop{#2}\limits^{\vbox to -.5ex{\kern-0.25ex\hbox{$#1$}\vss}}}
\newcommand{\Hom}{\text{Hom}}
\newcommand{\Un}{\text{Un}}
\newcommand{\Fix}{\text{Fix}}
\newcommand{\join}{\ast}
\newcommand{\idy}{\text{id}}
\newcommand{\rep}{\text{rep}}
\newcommand{\Qd}{\text{Qd}}
\newcommand{\ev}{\text{ev}}
\newcommand{\Sec}{\text{Sec}}
\newcommand{\hSec}{\text{hSec}}
\newcommand{\Th}{\text{Th}}
\newcommand{\Map}{\text{Map}}
\newcommand{\Dim}{\text{Dim}}
\newcommand{\SL}{\text{SL}}
 \newcommand{\normal}{\ensuremath{\triangleleft}} 
\newcommand{\set}[1]{\ensuremath{ \lbrace #1 \rbrace }}
 \newcommand{\sm}{\wedge}
\newcommand{\FF}{\mathbb{F}}
\newcommand{\ZZ}{\mathbb{Z}}
\newcommand{\sS}{\mathbb{S}}
\newcommand{\aA}{\mathcal{A}}
\newcommand{\Ss}{\mathcal{S}}
\newcommand{\bZ}{\mathbb Z}
\newcommand{\bF}{\mathbb F}
\let\c@equation=\c@subsubsection
\title{Dimension functions for spherical fibrations}
\date{}
 \address{Department of Mathematics, University of Western Ontario, London ON  N6A 5B7}
\email{cokay@uwo.ca}
\address{Department of Mathematics, Bilkent University, Ankara, 06800, Turkey}
\email{yalcine@fen.bilkent.edu.tr}
\author{C\.{i}han Okay and Erg\"{u}n Yal\c{c}in}
\begin{document}

\begin{abstract} 
Given a spherical fibration $\xi$ over the classifying space $BG$  of a finite group we define a dimension function   for the $m$--fold fiber join of $\xi$ where $m$ is some large positive integer. We show that the dimension functions satisfy the Borel-Smith conditions when $m$ is large enough. As an application we prove that there exists no spherical fibration over the classifying space of $\Qd(p)= (\ZZ/p)^2\rtimes\SL_2(\ZZ/p)$ with $p$--effective Euler class, generalizing the result of \cite{Unl04} about group actions on finite complexes homotopy equivalent to a sphere. We have been informed that this result will also appear in \cite{AGapp} as a corollary of a previously announced program on homotopy group actions due to Jesper Grodal.
\end{abstract}  

\maketitle

\section{Introduction}  
This paper is motivated by a conjecture about group actions on products of spheres due to Benson and Carlson \cite{BC87}. The conjecture states that the maximal rank of an elementary abelian $p-$group contained in a finite group is at most $k$ if and only if there exists a finite free $G$-CW-complex $X$ homotopy equivalent to a product of spheres $\sS^{n_1}\times \sS^{n_2}\times \cdots \times \sS^{n_k}$.   When $k=1$ this conjecture is proved by Swan \cite{Swa60}. The next case $k=2$ is proved by Adem and Smith \cite{AS01} for  finite groups that do not involve  $\Qd(p)= (\ZZ/p)^2\rtimes\SL_2(\ZZ/p) $ for any odd prime $p$. 

An important technique developed in \cite{AS01} for constructing free actions starts with a spherical fibration over $BG$ whose Euler class is $p-$effective  and uses fiber joins to construct a free action on a finite complex homotopy equivalent to a product of two spheres.  One source of such a spherical fibration is a finite $G$-CW-complex $X\simeq \sS^n$ with rank one isotropy. \"{U}nl\" u \cite{Unl04} proved that for $G=\Qd(p)$ there exists no such finite $G$-CW-complex. The main goal of this paper is to extend this result by showing that there exists no spherical fibration over $BG$ with $p-$effective Euler class when $G$ is $\Qd(p)$. We also  show that $\Qd(p)$ cannot act freely on a finite  complex homotopy equivalent to $\sS^n\times \sS^n$. However, the case of the Benson-Carlson conjecture where the dimensions of the spheres are different remains open. 

Given a spherical fibration $\xi: E\to BG$ over $BG$ with fibers $\sS^n$, there is an infinite-dimensional free $G$-space $X_\xi$, defined as the pull-back of $\xi$ along the universal fibration $EG\rightarrow BG$, such that the Borel construction $EG\times_GX_\xi\to BG$ is fiber homotopy equivalent to $\xi$.
 Two $G$-spaces $X$ and $Y$ are said to be $hG$-equivalent if there is a zig-zag sequence of $G$-maps between $X$ and $Y$ that are weak equivalences (non-equivariantly). The fibre homotopy classes of $n$-dimensional spherical fibrations over $BG$ are in one-to-one correspondence with $hG$-equivalence classes of $G$-spaces that are homotopy equivalent to $\sS^n$  (see \S \ref{hG} for details). We will use this correspondence throughout the paper without giving further explanations. 

Let $G$ be a $p-$group and $X$ be a finite dimensional $G$-CW-complex. We write $H(-)$ for mod-$p$ cohomology.
Classical Smith theory says that if $H(X)\cong H(\sS^n)$ for some $n$ then the fixed points  $X^G$ also has the mod-$p$ cohomology of a sphere. A systematic way of studying fixed point subspaces is to define dimension functions $n_X$   by setting
$$
H(X^{K})\cong H(\sS^{n_X(K)-1})
$$
for a subgroup $K\leq G$. It is a fundamental fact that $n_X$ satisfies certain properties called the Borel-Smith conditions. Smith theory fails for infinite-dimensional complexes. The problem is that up to homotopy every action can be made free by taking a product with the universal contractible free $G$--space $EG$. One way around this problem is to consider homotopy fixed points $X^{hK}=\Map(EK,X)^K$ instead of ordinary fixed points. An important algebraic tool for studying cohomology of homotopy fixed points is Lannes' $T-$functor and its variant the $\Fix$ functor. Here a technical point is that $X$ needs to be replaced by its Bousfield-Kan $p$--completion $X^\wedge_p$ and the theory only works for elementary abelian $p$--groups.   Then a theorem of Lannes' relates the mod-$p$ cohomology of homotopy fixed points $(X^\wedge _p )^{hV}$ to the algebraically defined object $\Fix _V (H(X_{hV}))$ for an elementary abelian $p$--subgroup $V$ in $G$, where $X_{hV}=EV\times_VX$. 

Lannes' theory can be applied  under certain conditions. We show that these conditions can be satisfied by replacing a given $G$--space $X  \simeq \sS^n$ with the $p$--completion of its $m$-fold join  
$$
X[m]= (\underbrace{X \ast\cdots \ast X }_m)^\wedge_p.
$$
For large $m$ we prove that classical Smith theory holds for infinite-dimensional complexes where the role of   fixed points is played by   homotopy fixed points.  
 
\Thm{\label{A}\cite{AGapp} Let $P$ be a finite $p$--group   and $X\simeq (\sS^{n})^\wedge_p$  be a $P-$space.
Then there exists a positive integer $m$ such that $(X[m])^{hP}\simeq (\sS^r)^\wedge_p$ for some $r$.  
} 
  
We are informed that this result is going to appear in \cite{AGapp} and it is part of a program on homotopy group actions due to Jesper Grodal which was announced previously. Using this result we can define dimension functions for mod-$p$ spherical fibrations. 
A mod-$p$ spherical fibration is a fibration whose fiber has the homotopy type of a $p$--completed sphere.
Given a mod-$p$ spherical fibration $\xi:E\rightarrow BG$ and a $p$-subgroup $Q\leq G$, we can restrict the fibration $\xi$ to a fibration $\xi|_{BQ} : E_Q\to BQ$ by taking pull-back via the inclusion map $BQ\to BG$. This corresponds to restricting the $G$-action on $X_\xi$ to a $Q$-action via the inclusion map. We define the integer $n_{\xi[m]}(Q)$ via the weak equivalence

$$
(X_\xi[m])^{hQ}\simeq (\sS^{n_{\xi[m]}(Q)-1})^\wedge_p
$$
which is a consequence of Theorem \ref{A}.
It turns out that for $m$ large enough  $n_{\xi[m]}$ satisfies the Borel-Smith conditions when regarded as an integer valued function on the set of $p$--subgroups of $G$   (see Theorem \ref{BS}).
The dimension function can be made independent of $m$ by considering a rational valued dimension function defined as follows:
$$
\Dim_\xi(Q)= \frac{1}{m} n_{\xi[m]}(Q)
$$
for every $p$--subgroup $Q\leq G$.

The Euler class of a fibration is said to be $p$-effective if its restriction to elementary abelian $p$-subgroups of maximal rank is  non-nilpotent. This is a condition on the Euler class of a spherical fibration that is required to obtain a free action of a rank two group on a product of two spheres using the Adem-Smith method. As an application of the dimension function that we defined, we obtain the following.  
 
\Thm{\label{B}\cite{AGapp} Assume $p>2$. There exists \underline{no} mod-$p$ spherical fibration $\xi:E\rightarrow B\Qd(p)$ with a $p$--effective Euler class.}

We are informed that this result is also going to appear in \cite{AGapp} and it was previously announced as a theorem by Jesper Grodal.   As a consequence of Theorem \ref{B}, we obtain that the Adem-Smith method of constructing  free actions on  finite complexes homotopy equivalent to a product of spheres does not work for $\Qd(p)$.  

  Another method for constructing free actions on a product of two spheres $\sS^{n_1} \times \sS ^{n_2}$  is given by Hambleton and \" Unl\" u \cite{HU09}. This method applies only to the equal dimensional case ($n_1=n_2$). The following theorem shows that this method cannot be used for $\Qd(p)$ either. 

\Thm{\label{C} Let $G=\Qd(p)$. Then for any $n\geq 0$, there is \underline{no} finite free $G$-CW-complex $X$
homotopy equivalent to $\sS^n \times \sS^n$. }
 
Therefore if Benson-Carlson conjecture holds then in the construction of a complex $X\simeq \sS^{n_1}\times \sS^{n_2}$ with free $\Qd(p)$-action the possibilities are narrowed down to distinct dimensional spheres with a more exotic action. 

 The general theory of homotopy group actions has been considered by Adem and Grodal \cite{AGapp}. They  have informed us that Theorems \ref{A} and \ref{B} will also appear in their paper under preparation. The idea of using dimension functions for studying mod-$p$ spherical fibrations goes back to Grodal and Smith's unpublished earlier work, although an outline of their ideas can be found in the extended abstract \cite{GSrep}.  Theorems \ref{A} and \ref{B} can also be thought of as corollaries of a program on homotopy group actions due to Grodal. 
We are grateful to Adem and Grodal for sharing their ideas with us on the subject, and we are looking forward to reading their complete account on the subject. Here we offer our proofs for Theorems \ref{A} and \ref{B} for completeness and to cover a gap in the existing literature. We should also mention that a result stated by Assadi \cite[Corollary 4]{Ass88} also implies Theorems \ref{A} and \ref{B}. Unfortunately, no proofs were provided for this result either.

The organization of the paper is as follows. In Section \ref{Tfunc} we compute $\Fix(HE)$ for a fibration $\xi:E\rightarrow B\ZZ/p$ whose fiber has the cohomology of a sphere. Our main result Theorem \ref{A} (Theorem \ref{hofix}) is proved in Section \ref{cohfix} where we study   the space of sections of a mod-$p$ spherical fibration over the classifying space of a $p$--group. The dimension function for an $m$--fold join of a  mod-$p$ spherical fibration is defined in  Section \ref{dimfun}. We prove the non-existence result Theorem \ref{B} (Theorem \ref{nospherical}) in this section. In Section \ref{prodaction} we prove Theorem \ref{C} (Theorem \ref{thm:noprodaction}). We collected some results about mapping spaces, homotopy fixed points, and fiber joins in an  appendix  in Section \ref{appendix}.
 
 \vskip 5pt

{\bf Acknowledgement:} We thank Alejandro Adem, Matthew Gelvin, and Jesper Grodal for their comments on the first version of this paper. 
The second author is supported by T\" ubitak 1001 project (grant no: 116F194).

 \section{Spherical fibrations and Lannes' $T$-functor}\label{Tfunc} 
In this section we compute $\Fix(HE)$ for a mod-$p$ spherical fibration $E\rightarrow B\ZZ/p$. More generally, we work with fibrations where the mod-$p$ cohomology of the fiber is isomorphic to the cohomology of a sphere.
We modify the argument of \cite[Chapter 3, \S 4]{Die87} for the classical case which works for group actions on finite dimensional complexes and use the connection between Lannes' $T$-functor and localization, established in \cite{DW91}.
 
\subsection{Lannes $T$-functor:}  Let $\catU$ (resp. $\catK$) denote the category of unstable modules (resp. unstable algebras) over the mod-$p$ Steenrod algebra $\aA_p$. Let $V$ denote an elementary abelian $p$-group and $HV$ the mod-$p$ cohomology ring of $V$. The tensor product functor $HV\otimes-:\catU\rightarrow \catU$ has a left adjoint $T^V:\catU\rightarrow \catU$ which is called the Lannes $T$-functor.  
Let $\catU(HV)$ denote the category of unstable modules $M$ with an $HV$-module structure such that the  multiplication map $HV\otimes M\rightarrow M$ satisfies the Cartan formula.  Let $f:HW\rightarrow HV$ denote the map induced by a subgroup inclusion $V\subset W$. Its adjoint $\hat{f}:T^VHW\rightarrow \FF_p$  is determined by a ring homomorphism $\hat{f}_0:(T^VHW)^0\rightarrow \FF_p$ in degree zero. We define
$$T_f^V(M)=\FF_p\otimes_{(T^VHW)^0}T^VM $$
 where the $(T^VHW)^0$-module structure on $\FF_p$ is the one determined by $\hat f_0$.
Let $S_f$ denote the multiplicatively closed subset of $HV$ generated by the images of the Bocksteins of one-dimensional classes in $HW$ that map non-trivially under $f$. The  following is the main theorem of  \cite{DW91}.
\Thm{(Dwyer-Wilkerson \cite{DW91}) \label{loc_T}
Let $W$ be an elementary abelian $p$-group, $V$ a subgroup
of $W$, and $f:HW\rightarrow HV$ the map induced by subgroup inclusion.
Suppose that $M$ is an object of $\catU(HV)$ which is finitely-generated as a
module over $HV$. Then there is a natural isomorphism
$$
T_f^V(M)\cong \Un\; S^{-1}_f(M).
$$
}

For an object $M$ in $\catU(HV)$ the $\Fix$ functor is defined by  
$$
\Fix_V(M)=\FF_p\otimes_{T^VHV} T^VM 
$$
where $\FF_p$ is regarded as a $T^VHV$-module via the adjoint $\hat \varphi: T^VHV\rightarrow \FF_p$ of the identity map $\varphi:HV\rightarrow HV$, see \cite[\S 4.4.3]{Lan75} for details.  
We record the following properties.

\Pro{\label{T_pro} Let $M$ be an object in $\catU(HV)$.
\begin{enumerate}
\item The natural map $T_\varphi^V M \rightarrow HV \otimes \Fix_V M$ is an isomorphism in $\catU(HV)$.
\item If $M$ is a finitely generated $HV$-module then the localization of the natural map $M\rightarrow T^V_\varphi M$ with respect to $S_\varphi$ is an isomorphism.
\end{enumerate}
} 
\Proof{The first result is proved in \cite[Proposition 4.5]{Lan75}. In the second result the natural map is obtained as follows. Let $M\rightarrow HV\otimes T^VM$ denote the adjoint of the identity map $T^VM\rightarrow T^VM$. Composing this map with the unique algebra map $HV\rightarrow \FF_p$ gives a map $M\rightarrow T^VM$. The desired map is obtained by applying the natural projection $T^VM\rightarrow T^V_\varphi M$ to the second factor. The fact that the resulting map is an isomorphism can be found in \cite[Lemma 4.3, \S 5]{DW91}.}

\subsection{Spherical fibrations over $B\ZZ/p$} 
We will study fibrations  $\xi: E\rightarrow B\ZZ/p$ where the cohomology $HF$ of the fiber $F$ is isomorphic to $H(\sS^n)$ for some $n\geq 0$, and show that $\Fix(HE)\cong H(\sS^r)$ for some $-1\leq r\leq n$. Note that mod-$p$ spherical fibrations satisfy this condition.

We start with recalling the mod-$p$ cohomology ring of $\ZZ/p$.
If $p=2$ the cohomology ring
$H(\ZZ/2)$  is a polynomial algebra $\FF_2[t]$ where $t$ is of degree one. When $p>2$ we have $H(\ZZ/p)=\FF_p[t]\otimes\Lambda[s]$ where $s$ is of degree one and $t=\beta s$. Here $\beta$ is the Bockstein map.
The set $S_\varphi$ corresponding to the identity map $\varphi:H\ZZ/p\rightarrow H\ZZ/p$ is generated by the Bockstein of the one dimensional class in each case. If $S=\set{1,t,t^2,\cdots}$ then localization with respect to $S$ is the same as localization with respect to $S_\varphi$. For simplicity of notation, when $V=\ZZ/p$ we will write  $T=T^{V}$, $T_\varphi=T^{V}_\varphi$, and $\Fix=\Fix_{V}$.

\Lem{\label{empty} For an arbitrary fibration $\xi:E\rightarrow BV$ we have $(\Fix_V HE)^0=0$ if and only if $\xi^*:HV\rightarrow HE$ does not split in $\catK$.
}
\Proof{$(\Fix_VHE)^0$ is isomorphic to  $(T_\varphi^V HE)^0$ which  has a $\FF_p$-basis $Z_\varphi$ given by the set  of $\catK$-maps $\alpha:HE\rightarrow HV$ such that $\alpha\xi^*$ is the identity map on $HV$, see \cite[Theorem 3.8.6]{Sch94}.
}

Now we are ready to prove our main result in this section.

\Thm{\label{rankfix} Let $\xi:E\rightarrow B\ZZ/p$ be a fibration such that $HF\cong H(\sS^n)$. Then $\Fix\,(HE)\cong H(\sS^r)$ for some $-1\leq r\leq n$.  } 
\begin{proof} 
The Serre spectral sequence of the fibration $\xi$ has $E_2$-page given by
$$
H(\ZZ/p) \otimes HF \Rightarrow HE,
$$
which is non-zero only in two rows since $HF\cong H(\sS^n)$. The spectral sequence is determined by the differential $d_{n+1}:E_2^{0,n}\rightarrow E_2^{n+1,0}$ whose image lies in the polynomial part of $H\ZZ/p$ \cite[page 137]{AP93}.
First we assume that $d_{n+1}$ is non-zero i.e. $\xi^*$ does not split.   In this case
 $t$ is nilpotent in $HE$. Hence the localization vanishes:  $S^{-1}HE=0$. By Theorem \ref{loc_T} we have $T_\varphi HE =0$ and  the first part of Proposition \ref{T_pro} implies that $\Fix\, HE =0$.
Next assume that  $d_{n+1}=0$ i.e. $\xi^*$ splits. Localizing the natural map $HE\rightarrow T_\varphi HE$ with respect to $S$ gives a diagram
\begin{equation}\label{D1}
\begin{tikzcd}
HE \arrow{r} \arrow{d} & T_\varphi HE \arrow[d,hook] \\
S^{-1} HE \arrow[r,"\cong"] & S^{-1}(T_\varphi HE )
\end{tikzcd}
\end{equation}
Here the isomorphism is a consequence of the second property in Proposition \ref{T_pro}. The monomorphism maps onto the unstable part of $S^{-1}(T_\varphi HE )$ as a consequence of Theorem \ref{loc_T} and the commutativity of the diagram. 
 Since $HE\cong H(\ZZ/p)\otimes HF$   is a free $H(\ZZ/p) -$module generated by an element of degree $n$ the localization map  $HE\rightarrow S^{-1}HE$ is a monomorphism. Note that in the spectral sequence multiplication by $t$ is an isomorphism. After localizing the spectral sequence the two rows extend to negative degrees. 
Therefore comparing the spectral sequences we see that the localization map is an isomorphism in degrees $i\geq n$. 
Hence from the diagram \ref{D1} it follows that the natural map $H^iE\rightarrow (T_\varphi HE)^i\cong (H(\ZZ/p) \otimes \Fix(HE))^i$ is an isomorphism for $i\geq n$. Therefore we have
\begin{equation}\label{iso}
\oplus_{k=0}^i \Fix(HE)^k \cong \oplus_{k=0}^i H^k(F) = \FF_p\oplus \FF_p\;\;\;\text{ for } i\geq n.
\end{equation}
One of the factors corresponds to a generator of $\Fix(HE)^0$ which is non-zero by Lemma \ref{empty}. The other one corresponds to a generator of $\Fix(HE)$ in degree $r\leq n$. Therefore $\Fix(HE)$ is isomorphic to $H(\sS^r)$ where $0\leq r\leq n$.
\end{proof}

\section{Cohomology of homotopy fixed points}\label{cohfix}

In this section we study the homotopy fixed point space or equivalently the space of sections of a fibration by applying Lannes' results. For the relationship between the $T$--functor and mapping spaces, our main references are \cite{Lan75} and \cite{Sch94}. See also Appendix \S \ref{appendix} for preliminaries on homotopy fixed points and space of sections.

\subsection{Lannes' theorems}
 Lannes' $T$-functor gives an approximation to the cohomology of the mapping space $\Map(BV,Y)$ where $Y$ is an arbitrary space. The evaluation map $\ev:BV\times \Map(BV,Y)\rightarrow Y$ induces a map in cohomology $HY\rightarrow HV\otimes H(\Map(BV,Y))$ whose adjoint is $ T^VHY\rightarrow H(\Map(BV,Y))$. The adjoint map factors through 
\begin{equation}\label{evhat}
\hat{\ev}: T^VHY\rightarrow H(\Map(BV,Y^\wedge_p)).
\end{equation}
In degree zero it is induced by the isomorphism $[BV,Y^\wedge_p]\rightarrow \catK(HY,HV)$ defined by applying the cohomology functor \cite[pg. 187]{Sch94}. It is convenient to work at a connected component associated to  the homotopy class of a map $\alpha:BV\rightarrow Y^\wedge_p$. Let $\alpha^*:HY\rightarrow HV$ denote the homomorphism induced in cohomology. 
The component of $T^VHY$ at $\alpha^*$ is defined by
$$
T^V(HY,\alpha^*)=\FF_p\otimes_{(T^VHY)^0} T^VHY 
$$
where the module structure on $\FF_p$ is given by the adjoint  $T^VHY\rightarrow \FF_p$ of $\alpha^*$.   Then $\hat \ev$ in \ref{evhat} is the product of the maps
$$
\hat{\ev}_\alpha: T^V(HY,\alpha^*)\rightarrow H(\Map(BV,Y^\wedge_p)_\alpha)
$$
where $\alpha$ runs over the  homotopy classes of maps $BV\rightarrow Y^\wedge_p$.

We need the notion of freeness for the next theorem due to Lannes. Let $G$ denote the left adjoint of the forgetful functor $\catK\rightarrow \catE$ where  $\catE$ denotes the category of graded vector spaces over $\FF_p$. 
For an object $K\in \catK$ let $\Sigma K^1$ denote the graded vector space isomorphic to $K^1$ in degree one and zero in other degrees. There is an inclusion of graded vector spaces $\Sigma K^1 \rightarrow K$. Applying $G$ to this map and composing with the counit $GK\rightarrow K$ of the adjunction gives a
  canonical map 
\begin{equation}\label{canonical_free}
\chi:G(\Sigma K^1) \rightarrow K.
\end{equation} 
 An unstable algebra $K$ is said to be \textit{free in degrees $\leq 2$} if  $\chi$  is an isomorphism in degrees $< 2$  and a monomorphism in degree $2$.  For a more explicit definition see   \cite[pg. 25]{Lan75}.

\Thm{\label{free}(Lannes \cite[Theorem 3.2.4]{Lan75}) Assume that $HY$ and $T^VHY$ are of finite type. If $T^VHY$ is free in degrees $\leq 2$ then
$$
\hat{\ev}_\alpha: T^V(HY,\alpha^*)\rightarrow H(\Map(BV,Y^\wedge_p)_\alpha)
$$
is an isomorphism of unstable algebras.
} 

In the next section we will apply this theorem to mod-$p$ spherical fibrations.

\subsection{Mod-$p$ spherical fibrations}
A fibration whose fiber is homotopy equivalent to a $p$--completed sphere is called a mod-$p$ spherical fibration.
A source for such fibrations is the fiberwise completion of spherical fibrations.
Let $\xi:E\rightarrow BV$ be a mod-$p$ spherical fibration with connected fiber.
There is a map of fibrations
$$
\begin{tikzcd}
E \arrow{d}{\xi} \arrow{r} & E^\wedge_p \arrow{d}{\xi^\wedge_p} \\
BV \arrow{r} & BV^\wedge_p
\end{tikzcd}
$$
where the horizontal maps are weak equivalences. In particular $E$ is $p$--complete. Moreover, the diagram is a homotopy pull-back diagram. This implies that  there is a weak equivalence $\Sec(\xi)\rightarrow\Sec(\xi^\wedge_p)$ induced by the $p$-completion map. Therefore in applying Lannes' theory we can ignore the $p$--completions up to weak equivalence.
We are interested in the cohomology of the space of sections of  $\xi$. The space of sections $\Sec(\xi)$ is weakly equivalent to the homotopy fixed point space $X_\xi^{hV}$ where   $X_\xi$ is the $V$--space defined as the pull-back of $\xi$ along  the universal bundle $EV\rightarrow BV$.
The space of homotopy sections $\hSec(\xi )$ is isomorphic to $BV\times \Sec(\xi )$ as  simplicial sets (\S \ref{mapping_space}). We will use Lannes' theory to study the cohomology of space of homotopy sections. Consider the diagram
$$
\begin{tikzcd}
{[BV,E ]} \arrow{r}{\cong} \arrow{d} & \catK(HE,HV) \arrow{d} \\
{[BV,BV]} \arrow{r}{\cong} & \catK(HV,HV)
\end{tikzcd}
$$
induced by $\xi$, where the horizontal maps are bijections. Let $Z_\varphi$ denote the subset   of maps in 
   $\catK(HE,HV)$ which splits $\xi^*$ induced in cohomology. 
 The subset of maps in $[BV,E ]$ which splits $\xi $ up to homotopy is in one-to-one correspondence with $Z_\varphi$.  Then we have 
$$
T_\varphi ^V HE = \prod_{\alpha^*\in Z_\varphi}T^V(HE,\alpha^*).
$$
and the product of the evaluation maps $\hat \ev_\alpha$  gives a map
\begin{equation}\label{map1}
T^V_\varphi HE \rightarrow H(\hSec(\xi )).
\end{equation} 
By Theorem \ref{free} this map is an isomorphism of unstable algebras if $T^V _{\varphi} HE$ is free in degrees $\leq 2$. Note that the conditions that $HE$ and $T^VHE$ are of finite type are satisfied in this case because of the spectral sequence calculation and by Theorem \ref{rankfix}.

\Thm{\label{free_app} Let $\xi:E\rightarrow BV$ be a mod-$p$ spherical fibration and $X_\xi$ denote the pull-back of $\xi$ along the universal fibration $EV\rightarrow BV$.
Assume that $\Fix_V(HE)\cong H(\sS^r)$ for some $-1\leq r\leq n$. If  $r\not=1 $ then 
$$H(X_\xi^{hV}) \cong H(\sS^r).$$ 
} 
\Proof{   By Section \ref{homotopy_fix}, we have
$X_\xi ^{hV}\simeq\Sec(\xi).$ Since $\hSec( \xi)  \simeq BV \times \Sec(\xi)$, it is enough to show that the map
in \ref{map1} is an isomorphism. When $r=-1$ the result follows from Lemma \ref{empty}. 
For the cases $r=0$ and $r>1$  
we check the freeness condition. 
In \ref{canonical_free} it turns out that the object $G(\Sigma K^1) $ is isomorphic to $HW$ where $W$ is the $\FF_p$--dual of $K^1$.
Therefore $\chi$ is an isomorphism in degrees $\leq 2$ if and only if $H^2 W \rightarrow K^2$ is a monomorphism.
We claim that  $T_\varphi^V HE $ is free in degrees $\leq 2$ when $r=0$ and $r>1$.
If $r=0$ then the set $Z_\varphi$ contains two maps $\alpha_0$, $\alpha_1$ and $T^V_\varphi HE = T^V(HE,\alpha_0) \oplus T^V(HE,\alpha_1)$ where each component is isomorphic to $HV$. Note that for $K=HV$ the map $\chi$ is an isomorphism.  Hence the freeness condition is satisfied. For $r>1$ we have $T^V_\varphi HE = T^V(HE,\alpha)$ for a unique homotopy class of a map $\alpha$.  The freeness property holds since $(T_\varphi^V HE)^1 = H^1V$ and $H^2V\subset (T_\varphi^V HE)^2$.  
}
 
\Rem{\rm{
Note that in general  $T_\varphi^V HE$ is not free in degrees $\leq 2$ when $r=1$. Hence in this case we cannot apply Lannes' Theorem \ref{free} to calculate the cohomology of the homotopy fixed point space, see also \cite[Theorem 4.9.3]{Lan75}. 
}} 
We turn to another theorem of Lannes to study the homotopy type of the homotopy fixed point space.

\Thm{\label{pcomp}\cite[Corollary 3.4.3]{Lan75}
Assume that $HY$, $T^V HY$, and $H(\Map(BV,Y)_\alpha)$ are of finite type. Then $  T^V (HY,\alpha^*) \rightarrow H(\Map(BV,Y)_\alpha)$ is an isomorphism of unstable algebras if and only if $(\Map(BV,Y)_\alpha )^\wedge_p \rightarrow \Map(BV,Y^\wedge_p)_\alpha$ is a homotopy equivalence.
}

Using this theorem, the Fix calculation Theorem \ref{rankfix}, and Theorem \ref{free_app} we can determine the homotopy type of the space of sections of a mod-$p$  spherical fibration  over $B\ZZ/p$.  

\Thm{\label{sec_ho_type} Let $\xi:E\rightarrow B\ZZ/p$ be a mod-$p$ spherical fibration such that $\Fix(HE)\cong H(\sS^r)$ where $r\not= 1$. Let $X_\xi$ denote the pull-back of $\xi $ along $E\ZZ/p\rightarrow B\ZZ/p$. Then 
$$X_\xi^{h\ZZ/p}\simeq  (\sS^r)^\wedge_p$$
where $-1\leq r\leq n$. 
} 
\Proof{In Theorem \ref{rankfix} we showed that $\Fix(HE)\cong H(\sS^r)$ for some $-1\leq r\leq n$. If $r\not=1$ then  Theorem \ref{free_app} implies that $H(\Sec(\xi))\cong H(\sS^r)$. The section space is the product of mapping spaces $\Map(BV,E)_\alpha$ where $\alpha$ is a representative of a homotopy class such that $\alpha^*$ lies in $Z_\varphi$.
Applying Theorem \ref{pcomp} to each component we obtain a homotopy equivalence
$$
\Sec(\xi )^\wedge_p \rightarrow \Sec(\xi ) 
$$
after identifying $\Sec(\xi )\simeq \Sec(\xi^\wedge_p)$ up to homotopy.
 Therefore $\Sec(\xi)\simeq X_\xi^{h\ZZ/p}$ is a $p$--complete space which has the cohomology of a sphere.
}

\subsection{Fiber joins and the $Fix$ functor} Next we look at the relationship between the $\Fix$ functor and fiber joins to be able to go around the problem  in Theorem \ref{sec_ho_type} when $r=1$.
Let us take two fibrations $\xi_1:E_1\rightarrow B\ZZ/p$ and $\xi_2:E_2\rightarrow B\ZZ/p$ with fibers denoted by $F_1$ and $F_2$, respectively. 
We assume that $HF_i\cong H(\sS^{n_i})$ and $\xi_1^*:H\ZZ/p\rightarrow HE_1$ splits.

\Lem{\label{fix_iso}  
The natural map 
$$
\Fix\,(HE_1)\otimes \Fix\,(HE_2) \rightarrow \Fix\, H(E_1\times_{B\ZZ/p}E_2)
$$
is an isomorphism.
}
\begin{proof}Consider the pull-back diagram of fibrations
$$
\begin{tikzcd}
&F_1 \arrow[r,equal] \arrow{d} & F_1 \arrow{d} \\
F_2\arrow{r} \arrow[d,equal] &E_1\times_{B\ZZ/p} E_2 \arrow{r}{p_1} \arrow{d}{p_2} & E_1\arrow{d}{\xi_1} \\
F_2 \arrow{r} &E_2 \arrow{r}{\xi_2}  & B\ZZ/p
\end{tikzcd}
$$
We have $HE_1=H\ZZ/p\otimes HF_1$. The differential $d_{n+1}$ in the spectral sequence of $\xi_2$ is either zero or $t^\alpha$.
By comparing the spectral sequences  we see that $H(E_1\times_{B\ZZ/p} E_2)$ is either isomorphic to $HE_1\otimes HF_2$ or $HE_1/(t^\alpha)$.
Consider the natural map
$$
\theta: HE_1 \otimes_{H\ZZ/p} HE_2 \rightarrow H(E_1\times_{B\ZZ/p} E_2)
$$
of unstable modules induced by $p_1$ and $p_2$. If $d_{n+1}=0$ then the tensor product is isomorphic to $HE_1 \otimes HF_2$. If the differential is given by $t^\alpha$ then it becomes $HE_1 \otimes_{H\ZZ/p} (H\ZZ/p)/(t^\alpha)\cong HE_1/(t^\alpha)$. Therefore in both cases  $\theta$ is an isomorphism of unstable modules.
 In fact it is a morphism in $\catU(H\ZZ/p)$. Then the result follows from the isomorphism $\Fix(M_1\otimes_{H\ZZ/p}M_2)\cong \Fix(M_1)\otimes \Fix(M_2)$ which is valid for $\catU(H\ZZ/p)-$modules  \cite[Theorem 4.6.2.1]{Lan75}.
\end{proof}

\Pro{\label{fix_join} 
Assume that   $\Fix\, H(E_i)\cong H(\sS^{r_i})$ for some $r_i$.
 Then there is an isomorphism
$$
\Fix\, (H(E_1\ast_{B\ZZ/p} E_2)) \cong H (\sS^{r_1+r_2+1}).
$$ 
}
\begin{proof}
Consider the homotopy push-out square
\begin{equation}\label{hpushout}
\begin{tikzcd}
 E_1\times_{B\ZZ/p} E_2 \arrow{r}\arrow{d} &  E_1 \arrow{d}\\
 E_2 \arrow{r} &  E_1\ast_{B\ZZ/p} E_2
\end{tikzcd}
\end{equation}
We can assume that $E_i=(X_i)_{h\ZZ/p}$  for some $\ZZ/p-$space $X_i$. The assignment $X\mapsto \Fix\, H(X_{h\ZZ/p})$ defines an equivariant cohomology theory on the category of $\ZZ/p-$spaces \cite[\S 4.7]{Lan75}.   Then associated to the push out diagram there is a Mayer-Vietoris sequence which   breaks into short exact sequences
$$
0 \rightarrow \Fix\, H(E_1)^q \oplus \Fix\, H(E_2)^q  \rightarrow  (\Fix\, H(E_1) \otimes \Fix \,H(E_2))^q  \rightarrow \Fix  \,H (E_1\ast_{B\ZZ/p} E_2) ^{q+1}\rightarrow 0
$$
where we used Lemma \ref{fix_iso} for the middle term.
In degree zero we need to consider the reduced groups. Now compare this sequence to the Mayer-Vietoris sequence of the homotopy push-out
$$
\begin{tikzcd}
\sS^{r_1}\times \sS^{r_2} \arrow{r}\arrow{d} &   \sS^{r_1} \arrow{d}\\
\sS^{r_2}  \arrow{r} & \sS^{r_1}\ast \sS^{r_2}
\end{tikzcd}
$$
The result follows from $\sS^{r_1}\ast \sS^{r_2}\cong \sS^{r_1+r_2+1}$.  
\end{proof}

Let $\xi:E\rightarrow B$ be a fibration with fiber $F$. The fiberwise $p$--completion (\S \ref{pre_completion}) of $\xi$ is a fibration $\xi^\wedge_{p/B}:E^\wedge_{p/B}\rightarrow B$ whose fiber is $F^\wedge_p$.  
We use the following notation (\S \ref{pre_fibjoin})
$$
X[m]= (\underbrace{X\ast\cdots \ast X}_m)^\wedge_p
$$
and for a fibration $\xi:E\rightarrow B$ we define 
$$
E_{/B}[m]=(\underbrace{E \ast_B\cdots \ast_B E}_m)^\wedge_{p/B}
$$ 
and denote the associated fibration by $\xi[m]:E_{/B}[m]\rightarrow B$.  

\Cor{\label{join_hfix}  
Let $X=X_\xi$ and $r$ be as defined in  Theorem \ref{sec_ho_type}. Then for all $m>2$ we have
$$
(X[m])^{h\ZZ/p} \simeq \sS^r[m].
$$
} 
\Proof{By Corollary \ref{not_equiv} we have a fiber homotopy equivalence $(X[m])_{h\ZZ/p}\simeq E_{/B\ZZ/p}[m]$. Since $X[m]$ is homotopy equivalent to  a $p$--completed sphere, using Proposition \ref{fix_join} we obtain
$$
 \Fix\, H(E_{/B\ZZ/p}[m]) \cong \Fix\, H(E\ast_{B\ZZ/p}\cdots \ast_{B\ZZ/p}E)\cong H(\sS^r[m]\,).
$$ Therefore we can apply Theorem \ref{sec_ho_type} to $\xi[m]$.
} 

\Rem{\label{rem_one}
\rm{ According to Theorem \ref{sec_ho_type}, as long as $r\not= 1$ the statement of Corollary \ref{join_hfix} holds with $m=1$. 
The problem we faced for $r=1$ can be handled by taking joins. When $r=1$ it suffices to take  $m=2$ to obtain
$$
(X[2])^{h\ZZ/p}   \simeq  (\sS^{3})^\wedge_p.
$$  
}} 
  
\subsection{Finite $p$--groups}  Next we  extend our results     to $p$--groups. 
Let $P$ be a finite $p$--group and $Z\cong \ZZ/p$ be a subgroup of $P$ contained in the center.
Consider a mod-$p$ spherical fibration $\xi:E\rightarrow BP$, and let $X=X_\xi$.
We are interested in computing the homotopy type of the homotopy fixed point space $X^{hP}$. By transitivity of homotopy fixed points (\S \ref{homotopy_fix}) we have
$$
X^{hP} \simeq Y^{hP/Z}
$$
where $Y=\Map(EP, X)^Z \simeq X^{hZ}$. By replacing  $X$ with $X[k]$ for some $k$ and using Corollary \ref{join_hfix}, we can ensure that $X^{hZ}$ is homotopy equivalent to a $p$--completed sphere. Now we can consider the $P/Z$--space $Y$.
But to be able to determine the homotopy type of $Y^{hP/Z}$ we may need to replace $Y$ with $Y[l]$. At this step we need the following lemma.
 
 \begin{lem}\label{join_weak} Let $X_1$ and $X_2$ be $P-$spaces such that for $i=1,2$, $X_i\simeq (\sS^{n_i})^\wedge_p$, and $X_i^{hZ}\simeq (\sS^{r_i})^\wedge_p$ for some $r_i>0$.
There is a  weak equivalence
$$
\alpha:(X_1^{hZ}\ast X_2^{hZ})^\wedge_p \rightarrow ((X_1\ast X_2)^\wedge_p)^{hZ}
$$ 
which is induced by a map of $P/Z$--spaces when the homotopy fixed point spaces are interpreted as mapping spaces.
\end{lem}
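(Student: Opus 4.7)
The plan is to construct $\alpha$ from the universal property of the join as a homotopy pushout and then verify it is a weak equivalence through the $\Fix$-functor computation. First, I would realize $X_1 \ast X_2$ as the $P$-equivariant homotopy pushout of the diagram $X_1 \xleftarrow{\pr_1} X_1 \times X_2 \xrightarrow{\pr_2} X_2$. Applying the functor $\Map(EZ,-)^Z$, which preserves finite products and carries $P$-equivariant maps to $P/Z$-equivariant maps, yields an analogous diagram of $P/Z$-spaces whose homotopy pushout is naturally $X_1^{hZ} \ast X_2^{hZ}$. The $P$-equivariant inclusions $X_i \hookrightarrow X_1 \ast X_2$ at the endpoints of the join induce $P/Z$-equivariant maps $X_i^{hZ} \to (X_1 \ast X_2)^{hZ} \to ((X_1 \ast X_2)^\wedge_p)^{hZ}$, whose restrictions to $X_1^{hZ} \times X_2^{hZ}$ are related by the canonical homotopy built into the join. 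The universal property of the homotopy pushout then produces a $P/Z$-equivariant map $\alpha_0 : X_1^{hZ} \ast X_2^{hZ} \to ((X_1 \ast X_2)^\wedge_p)^{hZ}$, and since the target will turn out to be $p$-complete, $\alpha_0$ factors through $p$-completion to give the desired $\alpha$.

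Next, I would identify the mod-$p$ cohomology of both sides. For the source, the hypothesis $X_i^{hZ} \simeq (\sS^{r_i})^\wedge_p$ together with the standard join formula gives $H((X_1^{hZ} \ast X_2^{hZ})^\wedge_p) \cong H(\sS^{r_1+r_2+1})$. For the target, set $E_i = (X_i)_{hZ}$ and $E = (X_1 \ast X_2)_{hZ}$. Theorem \ref{rankfix} combined with the hypothesis $X_i^{hZ} \simeq (\sS^{r_i})^\wedge_p$ identifies $\Fix_Z HE_i \cong H(\sS^{r_i})$, and then Proposition \ref{fix_join} yields $\Fix_Z HE \cong H(\sS^{r_1+r_2+1})$. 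Since $r_1+r_2+1 \geq 3 \neq 1$, Theorem \ref{sec_ho_type} gives $((X_1 \ast X_2)^\wedge_p)^{hZ} \simeq (\sS^{r_1+r_2+1})^\wedge_p$, and in particular confirms that the target is $p$-complete as required above.

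To conclude that $\alpha$ itself realizes this equivalence, I would compare the Mayer--Vietoris sequences arising on the two sides. By construction $\alpha$ intertwines the cohomological Mayer--Vietoris sequence for the homotopy pushout that defines $X_1^{hZ} \ast X_2^{hZ}$ with the $\Fix$-cohomology Mayer--Vietoris sequence used in the proof of Proposition \ref{fix_join}. Naturality of the connecting homomorphism forces $\alpha^*$ to carry the top-dimensional generator in $H^{r_1+r_2+1}$ to the top-dimensional generator, so $\alpha^*$ is an isomorphism on mod-$p$ cohomology. Since both source and target are $p$-complete, simply connected (as $r_1+r_2+1 \geq 3$), and of finite type, this upgrades to a weak equivalence.

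The principal obstacle I anticipate is this last cohomological identification: it requires tracing Lannes' evaluation map $\hat{\ev}$ through the homotopy pushout defining the join and verifying that the resulting natural transformations respect the $P/Z$-action, so that the comparison of Mayer--Vietoris sequences is valid componentwise in the product decomposition of $T_\varphi^Z HE$. The hypothesis $r_i > 0$ is essential: it ensures $r_1+r_2+1 \geq 3$, which both avoids the exceptional $r=1$ case in Theorem \ref{sec_ho_type} and guarantees simple connectivity of the spaces involved, so that a mod-$p$ cohomology isomorphism suffices to detect a weak equivalence.
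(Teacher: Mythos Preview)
Your approach is essentially the same as the paper's: the map is built from the join structure (the paper writes the explicit formula $\alpha'[f,g,t](z)=[f(z),g(z),t]$ on $\Map(EP,-)^Z$, which is precisely your pushout-induced map), and the weak equivalence is verified by computing both cohomologies as $H(\sS^{r_1+r_2+1})$ via Mayer--Vietoris on the source and Proposition~\ref{fix_join} together with Theorem~\ref{sec_ho_type} on the target. One small caveat: to carry a genuine $P/Z$-action you must model the homotopy fixed points as $\Map(EP,-)^Z$ rather than $\Map(EZ,-)^Z$, and your additional care in matching the top classes through the two Mayer--Vietoris sequences is a point the paper leaves implicit.
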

\begin{proof} We describe the map $\alpha$. 
There is a natural map of $P/Z-$spaces
$$
\alpha':\Map(EP,X_1)^Z\ast\Map(EP,X_2)^Z \rightarrow \Map(EP,X_1\ast X_2)^Z
$$ 
defined by $\alpha'[f,g,t](z)=[f(z),g(z),t]$. Note that $\Map(EP,X)^Z$ is weakly equivalent to $X^{hZ}$ via the natural map $EZ\rightarrow EP$. Hence we obtain 
$$X_1^{hZ}\ast X_2^{hZ}  \rightarrow (X_1\ast X_2)^{hZ}.$$
Composing this with the natural map $(X_1\ast X_2)^{hZ} \rightarrow ((X_1\ast X_2)^\wedge_p)^{hZ}$, we obtain a map
$$X_1^{hZ}\ast X_2^{hZ}  \rightarrow   ((X_1\ast X_2)^\wedge_p)^{hZ}.$$
Completion of this map at $p$ gives the map $\alpha$. Note that since $r_1+r_2+1>1$ we can apply Theorem \ref{sec_ho_type} to conclude that  $((X_1\ast X_2)^\wedge_p)^{hZ}$ is $p-$complete. To see that $\alpha$ is a weak equivalence it suffices to show that the map induced in mod-$p$ cohomology is an isomorphism. A Mayer-Vietoris type of argument shows that
$$
H((X_1^{hZ}\ast X_2^{hZ})^\wedge_p) \cong H(\sS^{r_1+r_2+1}).
$$
On the other hand Proposition \ref{fix_join} implies that $\Fix\,H(X_1\ast X_2)_{hZ}\cong H(\sS^{r_1+r_2+1})$ and by  Theorem \ref{sec_ho_type}  $((X_1\ast X_2)^\wedge_p)^{hZ}$ is weakly equivalent to  the $p-$completion of $\sS^{r_1+r_2+1}$.
\end{proof}

Now we are ready to prove the main theorem of this section.

\Thm{\label{hofix} Let $P$ be a finite $p$--group and $\xi:E\rightarrow BP$ be a mod-$p$ spherical fibration.
Then there exists a positive integer $m$ such that $X[m]^{hP}\simeq (\sS^r)^\wedge_p$ where $X=X_\xi$.
}
\Proof{  
We will use the transitivity property (\S \ref{homotopy_fix}) of homotopy fixed points:
$$
(\Map(EP, X)^Z)^{hP/Z} \simeq X^{hP}
$$
where $\Map(EP, X)^Z\simeq X^{hZ}$, and we will do induction on the order of $P$. 
We can assume $X^{hZ}$ is non-empty, otherwise the result holds trivially. 
Using Corollary \ref{join_hfix} we can replace $X$ by  $X[k]$ to ensure that  $Y=\Map(EP,X[k])^{Z}\simeq X[k]^{hZ}$ has the homotopy type of a $p$-completed simply connected sphere. We regard $Y$ as a $P/Z-$space. Since the order of $P/Z$ is less than the order of $P$  there exists,  by the induction hypothesis, some $l$ such that the homotopy fixed points $(Y[l])^{hP/Z}$ is weakly equivalent to a $p$--completed sphere. We claim that the homotopy fixed points of $X[kl]$   under the action of $P$ is a $p-$completed sphere. To see this let $A=X[k]$.  There is a weak equivalence
$$
(A^{hZ})[l] \rightarrow (A[l])^{hZ}
$$
which is induced by a map of $P/Z-$spaces when regarded as a map between the associated mapping spaces.  This can be shown by using Lemma \ref{join_weak} and doing induction on $l$. Now consider the natural $P$--map
$$
 X[kl] \rightarrow (X[k])[l] 
$$
which is also a weak equivalence. Using these two maps we obtain a zig-zag of weak equivalences
$$
(A^{hZ})[l] \rightarrow (A[l])^{hZ}\leftarrow   X[kl]^{hZ}
$$
through $P/Z-$maps. Thus we have 
$$X[kl]^{hP}  \simeq (Y[l])^{hP/Z}$$
and the result follows by induction.
}

\section{Dimension functions}\label{dimfun}
In this section we will define dimension functions for spherical fibrations and show that they satisfy the Borel-Smith conditions after taking fiber joins. 
 
\subsection{Dimension functions}  
Let $C(G)$ denote the ring of integer valued functions defined on the set of all subgroups of $G$ which are constant on $G$--conjugacy classes. Let $H\subset G$ be a subgroup. 
Given a finite $G$-CW-complex $Y$ with $H(Y)\cong H(\sS^n)$ the dimension function $n_Y$ of $Y$ is defined by 
 $H(Y^H)\cong H(\sS^{n_Y(H)-1})$.
Thus $Y$ gives rise to an element $n_Y$ of $C(G)$.

We extend this definition to mod-$p$ spherical fibrations.
Let $\xi:E\rightarrow BG$ be a mod-$p$ spherical fibration.
For a $p$--subgroup $Q\leq G$ let $X_Q$ denote the pull-back  $\xi|_{BQ}$ along the universal fibration $EQ\rightarrow BQ$.
By Theorem \ref{hofix} there exists an $m$ such that $(X_Q[m])^{hQ}$ has the homotopy type of $(\sS^{r_Q})^\wedge_p$ for some $r_Q$ for all $p$--subgroups $Q\leq G$. Note that by standard properties of homotopy fixed points and Theorem \ref{sec_ho_type}, we have $r_{Q'}\leq r_{Q}$ if $Q\leq Q'$ and $r_Q=r_{Q'}$ if $Q$ is conjugate to $Q'$ in $G$. 
  Let $\Ss_p(G)$ denote the set of all $p$--subgroups of $G$. 
 We define a  function 
 $$
 n_{\xi[m]}:\Ss_p(G) \rightarrow \ZZ\;\;\text{ by }\;\; n_{\xi[m]}(Q)=r_Q+1
 $$ 
which is constant on $G$--conjugacy classes and call it the dimension function associated to the fibration $\xi[m]$.
Given a spherical fibration we can consider the dimension function associated to its fiberwise $p$--completion.  
 
\Rem{\rm{
To associate a dimension function to a mod-$p$ spherical fibration independent of $m$ we can define a rational valued dimension function 
$$
\Dim_\xi(Q)=\frac{1}{m} n_{\xi[m]}(Q)
$$
for all $p$--subgroups $Q\leq G$ where $m$ is a positive integer large enough so that Theorem \ref{hofix} holds.
}}

\subsection{Dimensions and subgroups}
We will  prove an important relation satisfied by the dimension functions. Let $V$ be an elementary abelian $p$--group of rank two. Let $\xi:E\rightarrow BV$ be a mod-$p$ spherical fibration and $X=X_\xi$. Assume that $n_\xi$ is defined. (This can be achieved by replacing $\xi$ with $\xi[m]$.) This means that the homotopy fixed points of $X$ under the action of a subgroup of $V$ is a $p$--completed sphere.
 
By the Thom isomorphism theorem for $W\leq V$ the reduced cohomology ring of the Thom space $\Th(\xi_W)$ of the fibration $\xi_W: (X^{hW})_{hV}\rightarrow BV$ is a free $HV$-module on a single generator $t(\xi_W)$.
There is a map $X^{hV}\rightarrow X^{hW}$ defined as the composition
$$
\Map(EV,X)^V \rightarrow \Map(EV,X)^W \rightarrow \Map(EW,X)^W
$$
of the natural inclusion of the fixed points, and the map induced by $EW\rightarrow EV$.
This map induces a diagram  
\begin{equation}\label{cofib_diag}
\begin{tikzcd}
(X^{hV})_{hV} \arrow{d}{\xi_{V}} \arrow{r} & (X^{hW})_{hV} \arrow{d}{\xi_{W}} \\
BV \arrow[r,equal] \arrow{d} & BV\arrow{d} \\
\Th(\xi_{V}) \arrow{r}{\alpha} & \Th(\xi_{W}) \arrow{r}{\beta} & \Sigma (X_{W,V})_{hV}
\end{tikzcd}
\end{equation}
where $X_{W,V}$ is the cofiber of   $X^{hV}\rightarrow X^{hW}$. 
The bottom row is a cofibration sequence. 
In the  long exact sequence of cohomology groups 
\begin{equation}\label{longexact}
\cdots\rightarrow \tilde H^{i}(\Th(\xi_{W})) \stackrel{\alpha^*}{\rightarrow}\tilde H^{i}(\Th(\xi_V))  \stackrel{\beta^*}{\rightarrow}  H^{i}(\,(X_{W,V})_{hV}) \rightarrow \cdots
\end{equation}
 we have $\alpha^*(t(\xi_{W}))=e_{W,V}t(\xi_{V})$ for some element $e_{W,V}$ in $HV$.   
Let us set $S_W=S_{f}$ where $f:HV\rightarrow HW$ is the map induced by a subgroup inclusion $W\subset V$.

\Lem{\label{coh_cofib} Let $W\subset V$ be a subspace of codimension one. Then there is an isomorphism $ H(\,(X_{W,V})_{hV})\cong HV/(e_{W,V})$.} 
\begin{proof}
Let $Y$ denote the space of homotopy fixed points $\Map(EV,X)^W\simeq X^{hW}$. Let  $V=W\times L$ be a splitting. Then $Y^{hL}\simeq X^{hV}$ and $(X^{hV})_{hV} \rightarrow (X^{hW})_{hV}$ induce a map in cohomology $H((X^{hW})_{hV})\rightarrow H((X^{hV})_{hV})$. Note that $(X^{hW})_{hV}\simeq BW\times Y_{hL}$ and $(X^{hV})_{hV}\simeq BW\times (Y^{hL})_{hL}$. Therefore we obtain
$$
HW \otimes H(Y_{hL}) \rightarrow HW \otimes H((Y^{hL})_{hL})
$$
which   becomes an isomorphism after localizing with respect to $S_L$. This is a consequence of the isomorphism $T^L_\varphi H(Y_{hL})\cong H((Y^{hL})_{hL})$ implied by Theorem \ref{free_app} and the second part of Proposition \ref{T_pro} applied to $M=H(Y_{hL})$.
Therefore localization of $H((X^{hW})_{hV})\rightarrow H((X^{hV})_{hV} )$ with respect to $S_V$ is an isomorphism. From the map between the cofiber sequences in \ref{cofib_diag} we see that  the map between the cohomology rings of Thom spaces becomes an isomorphism after localizing with respect to $S_V$. Thus there is a diagram
$$
\begin{tikzcd}
\tilde H(\Th(\xi_W)) \arrow{r}{\alpha^*} \arrow{d} & \tilde H(\Th(\xi_V)) \arrow{d}\\
S^{-1}_V\tilde H(\Th(\xi_W)) \arrow{r}{\cong} & S^{-1}_V \tilde H(\Th(\xi_V)) 
\end{tikzcd}
$$
where the vertical arrows are injective since $\tilde H(\Th(\xi_W))$ and $\tilde H(\Th(\xi_V))$ are $HV$-free. 
Therefore  in \ref{longexact} we have that $\alpha^*$ is injective and $\beta^*$ is surjective. Then $  H(\,(X_{W,V})_{hV})$ is the quotient of the map
$$
\tilde H(\Th(\xi_W)) \rightarrow \tilde H(\Th(\xi_V))  
$$
induced by $t(\xi_W)\mapsto e_{W,V}t(\xi_V)$. 
\end{proof}

  Let us simply denote $e_{W,V}$ by $e_V$ when $W$ is the trivial group.  
 Let $t_L$ denote the generator of the polynomial part of $HL$. We regard $t_L$ as an element of $HV$ via the isomorphism $HV\cong HL\otimes HW$.

\Lem{\label{lem1}  
Assume that $e_V$ belongs to the polynomial part of $HV$. We have $n_X(W)>n_X(V)$ if and only if $t_L$ divides $e_V$. Moreover 
$$e_V= u\prod_W e_{W,V}$$
where $u\in \FF_p$ is a unit and $W$ runs over subspaces of codimension one in $V$ such that $n_X(W)>n_X(V)$.
} 
\begin{proof}
Note that $n_X(W)>n_X(V)$ if and only if $e_{W,V}=a t_L^\alpha$ for some $\alpha>0$ and $a\in \FF_p$ is non-zero. Let $\beta$ be the maximal natural number such that $t_L^\beta$ divides $e_V$. 
Consider the last cofiber sequences in Diagram \ref{cofib_diag} for the pair of subgroup inclusions  given by $W\subset V$ and $1\subset V$. There is a map between the cofiber sequences
$$
\begin{tikzcd}
\Th(\xi_V ) \arrow{r} \arrow[d,equal] & \Th(\xi_W) \arrow{r} \arrow{d} & \Sigma\, (X_{W,V})_{hV} \arrow{d} \\
\Th(\xi_V ) \arrow{r}  & \Th(\xi_1) \arrow{r}   & \Sigma\, (X_{1,V})_{hV} \ .
\end{tikzcd}
$$
We claim that the map
$S_W^{-1}H(X_{hV})\rightarrow S_{W}^{-1}H((X^{hW})_{hV})$ is an isomorphism. This   follows from 
the transitivity of the Borel construction.
The map $((X^{hW})_{hW})_{hL}\rightarrow (X_{hW})_{hL}$ between the Borel constructions with respect to the action of $L$ induces a map between the $E_2$-pages
$$
H(L, S_W^{-1}H(X_{hW})) \rightarrow H(L, S_W^{-1}H((X^{hW})_{hW}))
$$
of the associated spectral sequences. Then the claim follows from the isomorphism  
$$S^{-1}_WH(X_{hW}) \cong S^{-1}_WH((\,X^{hW}) _{hW}).$$  Now comparing the diagrams for $W\subset V$ and $1\subset V$ we see that localization of $H((X_{1,V})_{hV} )\rightarrow H((X_{W,V})_{hV} )$ with respect to $S_W$ is an isomorphism. Thus by Lemma \ref{coh_cofib} there is an isomorphism $S_W^{-1} HV/(e_V)\cong S_W^{-1} HV/(e_{W,V})$. This forces $\alpha=\beta$.

\end{proof}

\Pro{\label{sum}  
Assume $e_V$ belongs to the polynomial part of $HV$. If $W_1,\cdots,W_s$ denote the subspaces  of codimension one in $V$ then
$$
n_X(1)-n_X(V)=\sum_{i=1}^s (n_X(W_i)-n_X(V)).
$$
}
\Proof{ By Lemma \ref{lem1} a codimension one subspace $W$ contributes to the sum on the right-hand side if and only if $t_L$ divides $e_V$. Therefore the result follows from comparing the degrees. Note that
$
|e_V|=n_X(1)-n_X(V)  
$
and $|e_{W,V}|=n_X(W)-n_X(V) $.
}
 
\Rem{\rm{
In view of Corollary \ref{prod_Euler}, by choosing $m$ large enough in $\xi[m]$ we can achieve the property that $e_V$ belongs to the polynomial part of $HV$.
}} 

\subsection{Borel-Smith functions}  
An element $\tau$ of $C(G)$ is called a Borel-Smith function (\cite[Definition 5.1]{Die87}) if it satisfies the following conditions 
\begin{enumerate}[(i)]
\item if $H\normal K \subset G$ such that $K/H\cong \ZZ/p\times \ZZ/p$ and $H_i/H$ denotes the cyclic subgroups then
$$
\tau(H)-\tau(K)=\sum_{i=0}^p (\tau(H_i)-\tau(K)),
$$
\item if $H\normal K \subset G$ such that $K/H\cong \ZZ/p$ where $p>2$ then $\tau(H)-\tau(K)$ is even, and
\item if $H\normal L \normal K \subset G$ such that $L/H\cong \ZZ/2$ then $\tau(H)-\tau(L)$ is even if $K/H\cong \ZZ/4$; $\tau(H)-\tau(L)$ is divisible by $4$ if $K/H$ is a generalized quaternion group of order $\geq 2^3$.
\end{enumerate} 
Let $C_{b}(G)$ denote the additive subgroup  of Borel-Smith functions in $C(G)$. We also say a function $\Ss_p(G)\rightarrow \ZZ$ constant on the $G$--conjugacy classes satisfies Borel-Smith conditions if it satisfies (i), (ii), and (iii) on $p$--subgroups.

\Thm{\label{BS} There exists a positive integer $m$ such that $n_{\xi[m]}:\Ss_p(G)\rightarrow \ZZ$ satisfies the Borel-Smith conditions.}
\Proof{  Monotonicity is a consequence of Theorem \ref{sec_ho_type}.
Condition (i) is proved in Proposition \ref{sum}, where the hypothesis that $e_V$ belongs to the polynomial part of $HV$ holds by choosing $m$ large enough. This is a consequence of Corollary \ref{prod_Euler}. The  conditions (ii) and (iii)  can be achieved by taking $m$ large.
}

When $G$ is a finite nilpotent group
Borel-Smith functions   can be realized as dimension functions of virtual representations.
Let $RO(G)$ denote the Grothendieck group of real representations.
There is an additive morphism $\dim:RO(G)\rightarrow C(G)$ which sends a real representation $\rho$ to the function which sends a subgroup $H$ to the dimension of the fixed subspace $\rho^H$. Let $C_\rep(G)$ denote the image of this homomorphism. A key fact is that if $G$ is a finite nilpotent group then $C_b(G)=C_\rep(G)$. In the case of $p$-groups we can use honest representations when the Borel-Smith function is also monotone.

\Thm{\cite[Theorem 5.13]{Die87} \label{rep}  If $P$ is a $p$-group and $\tau$ is a monotone Borel-Smith function then there is a real representation $\rho$ such that $\tau=\dim \rho$.}

Up to fiber joins the dimension function of a mod-$p$ spherical fibration can be realized by the dimension function of a real representation.

\Cor{Let $P$ be a finite $p$-group. Given a  mod-$p$ spherical fibration $\xi:E\rightarrow BP$  there is a positive integer $m$ and a real representation $\rho$ such that $n_{\xi[m]}=\dim\rho$.}

\subsection{Proof of Theorem \ref{B}} 
A cohomology class in $H(G)$ is called \textit{$p$-effective} if its restriction to maximal elementary abelian $p$-subgroups is non-nilpotent. Let $\Qd(p)$ denote the semi-direct product $(\ZZ/p)^2  \rtimes \SL_2(\ZZ/p) $ where the special linear group acts in the obvious way.

\Cor{\label{nospherical} Assume $p>2$. There exists \underline{no} mod-$p$ spherical fibration $\xi:E\rightarrow B\Qd(p)$ with a $p$-effective Euler class.}
\begin{proof} The idea of the proof follows \cite[Theorem 3.3]{Unl04} but we use the dimension functions for  mod-$p$  spherical fibrations.
Assume that there is  a fibration $\xi$ with an effective Euler class. Consider the dimension function $n_{\xi[m]}$ for some large $m$. Its Euler class is still effective by Corollary \ref{prod_Euler}.
Let $P$ be a Sylow $p$--subgroup of $G=\Qd(p)$. The center $Z(P)$ is a cyclic group of order $p$. 
 By Theorem \ref{BS} we can choose $m$ large enough so that the dimension function of the restricted bundle $\xi[m]|_{BP}$ belongs to $C_{b}(P)$. Then Theorem \ref{rep} implies that there is a real representation $\rho$ which realizes this dimension function. Since the Euler class is $p$--effective the  dimensions of  subspaces of $\rho$ fixed under a cyclic subgroup $C\leq P_i$ has the property that $\dim\rho^{C}= 0$ if and only if $C=Z(P)$ \cite[Lemma 3.4]{Unl04}. Therefore on cyclic $p$--subgroups of $P$ the dimension function $n_{\xi[m]}$ is zero only at $Z(P)$ but in $G$ the center $Z(P)$ is conjugate to a non-central cyclic $p$--subgroup $C$ of $P$. Then $n_{\xi[m]}(Z(P))=n_{\xi[m]}(C)$ but this gives a contradiction.
\end{proof}

\section{$Qd(p)$-action on $\sS^n \times \sS^n$ }\label{prodaction}

In this section we prove the following theorem.

\Thm{\label{thm:noprodaction} Let $G=\Qd(p)$. Then for any $n \geq 0$, there is \underline{no} finite free $G$-CW-complex $X$
homotopy equivalent to $\sS^n \times \sS^n$. }

If $p=2$, then  $G=\Qd(2)$ is isomorphic to the symmetric group $S_4$ that includes $A_4$ as a subgroup. In this case the theorem 
follows from a result of Oliver \cite{Oli79} which says that the group $A_4$ does not act freely on a finite complex $X$ homotopy equivalent to a product of two equal dimensional spheres. Also note that for $n=0$, the statement holds for obvious reasons. Hence 
it is enough to prove the theorem when $p$ is an odd prime and $n \geq 1$.

\Lem{Let $G$ be a finite group generated by elements of odd order. Let
$X$ be a finite free $G$-CW-complex homotopy  equivalent to $\sS^n\times \sS^n$ for some $n \geq 1$. 
Then $n$ is odd, and the induced $G$-action on $H^* (X; \bZ) $ is trivial. 
}

\Proof{ It is enough to prove this for the case $G=\bZ/p^k$, where $p$ is an odd prime.   By induction we can assume 
that the action of the maximal subgroup $H \leq G$  on cohomology is trivial. Consider the $G/H \cong \bZ/p$ action on $H^* (X; \bZ)$.
The only indecomposable $\bZ$-free $\bZ[\bZ/p]$-modules are either $1$-dimensional, $(p-1)$-dimensional, or $p$-dimensional \cite[Theorem 2.6]{HR62}.  
This gives that for $p>3$, the $G$ action on $H^* (X; \bZ)$ is trivial. For $p=3$, the only nontrivial module can occur in
dimension $n$, and in this case $G/H$ acts on $H^n (X; \bZ)$ with the action $x\to -y$ and $y\to x-y$, where $x, y$ are generators
of $H^n (X; \bZ)\cong \bZ \oplus \bZ$. Note that the trace of this action is $-1$, so by the Lefschetz trace formula $L(f)=2-(-1)=3$ when $n$ is odd, and $L(f)=2+(-1)=1$ when $n$ is even. In either case $L(f)\neq 0$, hence $G$ cannot admit a free action on $X$ if the $G/H$ action on homology is nontrivial. If the action is trivial, then again by Lefschetz trace formula, $n$ must be odd.
}

The group $SL_2(p)$ is generated by elements of order $p$. For example, we can take
$$A=\{ \begin{bmatrix} 1 & 1 \\ 0 & 1 \end{bmatrix}, \begin{bmatrix}  1 & 0 \\ 1 & 1 \end{bmatrix} \} $$ as a set of generators. 
Since $\Qd(p)= (\ZZ/p)^2\rtimes\SL_2(\ZZ/p)$ is a semidirect product of $(\bZ/p )^2 $ with $SL_2 (p)$, it is also generated by elements of order $p$. Hence we conclude the following.

\Pro{\label{pro:trivial}  Let $G=\Qd(p)$ where $p$ is an odd prime.
Suppose that there exists a finite free $G$-CW-complex $X$
homotopy equivalent to $\sS^n \times \sS^n$ for some $n \geq 1$. Then $n$ is odd
and $G$ acts trivially on $H^* (X; \bZ)$.  
}

To complete the proof of Theorem \ref{thm:noprodaction}, we use the Borel construction.  Let 
$G=\Qd(p)$ with $p$ odd, and  let $X$ be a finite free $G$-CW-complex
homotopy equivalent to $\sS^n \times \sS^n$ for some integer $n\geq 1$. By Proposition \ref{pro:trivial},  the induced action 
of $G$ on $X$ is trivial and $n=2k-1$ for some $k\geq 1$.  
Consider the Borel fibration $X_{hG}\rightarrow BG$ where $X_{hG}=EG\times _G X$.  
There is an associated
spectral sequence with $E_2$-term $$E_2 ^{i, j} = H^i (G; H^j (X ) )$$
that converges to $H^{i+j} (X_{hG})$.  

Note that since $G$ acts freely 
on $X$ we have $X_{hG} \simeq X/G$. From this one obtains 
that the cohomology ring $H^* (X_{hG})$ is  finite-dimensional in each degree and vanishes above some degree. The first nonzero differential in the above spectral sequence takes the 
generators of $H^{2k-1} (X ) =\bF_p \oplus \bF_p$ to the cohomology classes $\mu_1, \mu_2 $ in $H^{2k} (G)$.
These classes are called the $k$--invariants of the $G$--space $X$. 

For any subgroup $H \leq G$, we  can restrict the above spectral sequence to the one for the
 action of $H$ on $X$. This follows from the fact that the Borel construction is natural. The   $k$--invariant of this restricted action will be $Res ^G _ H \mu_1$ and $Res ^G _H \mu_2$, where $$Res ^G _H : H^* (G ) \to H^* (H)$$ denotes the homomorphism induced by inclusion of $H$ into $G$.

Let $V$ denote the (unique) normal elementary abelian subgroup $\bZ/p\times \bZ/p$ in $G$. Let $\theta _1, \theta _2$ denote the $k$-invariants of the restricted $V$-action on $X$. Note that the classes $\theta _i$ are restrictions of cohomology classes $\mu_1, \mu_2$ in $H^{2k} (G )$.
By the Cartan-Eilenberg stable element theorem, the classes $\theta _i$ lie in the invariant subring $H^* (V ) ^{SL _2 (p)}$. This invariant ring is described in detail in \cite[Proposition 1.4.1, Claim 1.4.2]{Long}. If we write $$H^* (V)=\bF_p [x, y]\otimes \wedge (u, v),$$  then $H^{ev} (V) ^{SL_2 (p)}=\bF_p [x , y]^{SL_2 (p)} \otimes \wedge (vu)$   where $\bF_p [x, y] ^{SL_2(p)} =\bF_p [ \xi, \zeta] $ is a polynomial subalgebra generated by 
$$\xi= \sum _{i=0} ^p (x ^{p-i} y ^i )^{p-1}   \quad \text{and} \quad \zeta= x y^p-y x^p.$$

For $i=1,2$, let  $\theta_i=f_i(\xi, \zeta)+ uvg_i(\xi, \zeta)$ for some polynomials $f_i, g_i$.  Since $\theta _1$ and $\theta_2$ are integral classes, i.e., they are in the image of the map $H^* (V, \bZ) \to H^* (V, \bF_p)$ induced by mod-$p$ reduction, we have $g_i=0$ for $i=1,2$. This can be seen easily by applying the Bockstein operator  $\beta: H^* (V )\to H^{*+1} (V )$ to the classes $\theta_i$. Since $\beta(u)=x$ and  $\beta(v)=y$, we obtain  $$0=\beta (\theta _i)=\beta(uv)g_i=(xv - uy)g_i.$$ This gives $g_i=0$.  Hence the $k$-invariants $\theta _1, \theta _2$ lie in the polynomial subalgebra $\bF_p[\xi, \zeta]$.

Let us write $I$ for the ideal in $H^* (V)$ generated by $\theta_1$ and $\theta_2$. By a theorem of Carlsson \cite[Corollary 7]{Car82}, the cohomology ring $H^*(X_{hV})\cong H^*(X/V) $ is isomorphic to  $H^* (V)/I $ and the ideal $I$ is closed under Steenrod operations. For such an ideal we prove the following.

\Pro{\label{pro:generated} Let $M \subset H^{2k} (V )$ be a nonzero subgroup which lies inside the subalgebra $R[\xi, \zeta]$. Assume that the ideal $H^* (V ) \cdot M$ generated by $M$ is  stable under the Steenrod operations. Then $p+1$ divides $k$ and $M=\{ \lambda \zeta ^{k/(p+1)} \, |\,  \lambda  \in \bF_p \}$. 
}

\Proof{We will use an argument similar to the argument given by Oliver \cite{Oli79} for $A_4$-actions on $\sS^n \times \sS^n$.
We can write a nonzero element $m$ in $M$ as a sum $m=(f_0 (\xi)+ f_1 (\xi)\zeta+ \cdots + f_t (\xi) \zeta^t) \zeta ^s$ for some homogeneous polynomials $f_i (\xi)$, where $f_0$ and $f_t$ are nonzero (here $s$ is possibly zero). We claim that $f_i=0$ for all positive $i$ and $f_0$ is a scalar. By direct calculation, it is easy to see that $P^1 (\zeta)=0 $ and $P^1 (\xi)=\zeta ^{p-1}$. Note that the degree of $\xi$ is $2p(p-1)$, the degree of $\zeta$ is $2(p+1)$, and $P^1$ increases degree by $2(p-1)$. Since $M$ generates an ideal closed under Steenrod operations,  $P^1 (m)=\sum _i \alpha_i m_i$ for some $m_i \in M$ and $\alpha_i \in H^*(V)$. The elements in $M$ are invariant under $SL_2(p)$-action, so the coefficients $\alpha _i$ are also invariant. But there are no $2(p-1)$ dimensional classes in the invariant subring. So, we must have $P^1 (m)=0$.  

We claim that any homogeneous polynomial $m=(f_0 (\xi)+ f_1 (\xi)\zeta+ \cdots + f_t (\xi) \zeta^t) \zeta ^s$ satisfying $P^1(m)=0$ is a scalar multiple of a power of $\zeta$. We will prove this by downward induction on the degree of $m$. First note that if $s\neq 0$, then $m=m'\zeta $ for some $m'$, and $P^1 (m)=0$ gives $P^1(m')=0$. Since $m'$ is a polynomial with smaller degree, by induction $m'$ is a scalar multiple of a power of $\zeta$, hence $m$ also has the same property. So, we can assume $s=0$ in the above formula. 

By applying $P^1$ to $m$, we get 
$$\sum _{i=0} ^t  P^ 1( f_i ) \zeta ^{i}=0$$
that gives $P^1 (f_i )=0$ for all $i$.  If $f(\xi)=a \xi ^d\neq 0$ is an homogeneous polynomial in $\xi$ such that $P^1 (f )=0$, then we have $$0=P^1 (f)=aP^1 (\xi ^d )=ad \xi ^d \zeta ^{p-1}$$ that implies that $d=pd'$ for some $d'$. Hence $f(\xi)=[g(\xi)] ^p$ where $g(\xi)=a\xi^{d'}$. Writing $$m= \sum _i a_i \xi^{d_i p} \zeta ^{i}=a_0 \xi ^{d_0p} + a_1 \xi ^{d_1 p} \zeta + \cdots + a_t  \xi ^{d_t p} \zeta ^t, $$ we see that for such a polynomial to be a homogeneous polynomial, we must have $a_i=0$ for all $i$ which are not multiples of $p$. This is because the degree of $\xi ^{d_i p} \zeta ^{i}$ is $2i$ mod $p$. Hence we can write $m$ in the form 
$$m=\Bigl ( \sum _i a_i \xi ^{d_i} \zeta ^i \Bigr ) ^p $$ by reindexing the sum. This means that to each $m$ of the above form, we can associate the element $m'= \sum _i a_i \xi ^{d_i} \zeta ^i $ which  has strictly smaller degree. The $M'$ generated by these $m'$ elements satisfies the conditions of the proposition because being closed under Steenrod operations is a property of the radical of the ideal \cite[Corollary 8]{Car82}, so taking the $p$-th root does not effect this property. Therefore by induction $M'$ is generated by a power of $\zeta$. Hence $M$ is also generated by a power of $\zeta$.
 }

We are now ready to complete the proof of Theorem \ref{thm:noprodaction}.

\Proof{[Proof of Theorem \ref{thm:noprodaction}] Let $I=(\theta _1, \theta _2)$ denote the ideal generated by the k-invariants  of the $V$-action on $X \simeq \sS^n \times \sS^n$.  By \cite[Corollary 7]{Car82}, there is an isomorphism $$H^* (X/V ) \cong H^* (V) /I$$ hence $H^* (V) /I$ (ungraded) is a finite-dimensional vector space.    Moreover the ideal $I$ is closed under  Steenrod operations and it is of the form $M\cdot H^* (V)$ where $M\subset \bF_p [x,y]^{SL_2 (p)}$. Hence by  Proposition \ref{pro:generated} the ideal $I$ is generated by $\zeta ^{k/(p+1)}$. This gives a contradiction because the fact that $H^* (V)/I$ is finite-dimensional implies that $I$ cannot be generated by one element by standard results in commutative algebra (see \cite[Proposition 3]{Car82}).} 
 
\rm{
\section{Appendix}\label{appendix}  
By a space we mean either a topological space or a simplicial set. The relation between the two is given by the singular simplicial set functor and the geometric realization functor. The category of simplicial sets is a model category with Quillen model structure with the usual weak equivalences and Kan fibrations. The geometric realization functor carries a Kan fibration to a Serre fibration.

\subsection{Mapping spaces}\label{mapping_space}
Let $X$ and $Y$ be simplicial sets. 
The mapping space $\Map(X,Y)$ is the simplicial set whose set of $n$-simplices is given by $\catS(\Delta[n]\times X,Y)$, and the simplicial structure is induced by the ordinal maps $\Delta[n]\rightarrow \Delta[m]$. $\Map_B(X,Y)_f$ will denote the connected component of a map $f:X\rightarrow Y$, in other words the space of maps which are homotopic to $f$. Let $\catS_{/B}$ denote the over category whose objects are maps $X\rightarrow B$ and whose morphisms are commutative triangles over $B$. Let  $\Map_{B}(X,Y)$  denote the mapping space for the over category which is defined to be the simplicial set with $n$-simplices given by the set  $\catS_{/B}(\Delta[n]\times X,Y)$ with the simplicial structure defined similarly.  

Let $\xi:E\rightarrow B$ be a Kan fibration of simplicial sets. Then $\xi$ induces a fibration of mapping spaces
$$
\xi_*:\Map(B,E)\rightarrow \Map(B,B)
$$
whose fiber  over the identity map $\idy:B\rightarrow B$ is  the mapping space $\Map_B(B,E)$. We will also denote the fiber by $\Sec(\xi)$ and call it the space of sections of $\xi$. 
There is a pull-back diagram
$$
\begin{tikzcd}
\hSec(\xi) \arrow{r} \arrow{d} & \Map(B,E) \arrow{d}{\xi_*} \\
\Map(B,B)_\idy \arrow{r} & \Map(B,B)
\end{tikzcd}
$$
where $\hSec(\xi)$ is the space of maps $B\rightarrow E$ such that the triangle
$$
\begin{tikzcd}
B \arrow{r} \arrow[d,equal] & E \arrow{dl}{\xi} \\
B 
\end{tikzcd}
$$
commutes up to homotopy.

In Lannes' theory we will consider fibrations over the classifying space  of an elementary abelian $p$-group $V$.
The classifying space $BV$ is a simplicial group with product $BV\times BV\rightarrow BV$ induced by the product on $V$.   The mapping space $\Map(BV,BV)$ is isomorphic to $\Hom(V,V)\times BV$ as a simplicial set \cite[Proposition 25.2]{May67}, and the adjoint $BV\rightarrow \Map(BV,BV)$ of the product  map  identifies $BV$ with the identity component of the mapping space.  For any  $X$ the mapping space $\Map(BV,X)$ has an induced action of $BV$. Also the simplicial monoid $\Map(BV,BV)$ acts by pre-composition on the mapping space.  The actions are equivariant with respect to the isomorphism $BV\rightarrow \Map(BV,BV)_\idy$ of simplicial abelian groups \cite[Proposition 25.3]{May67}. Given a fibration $\xi:E\rightarrow BV$ in the diagram of fibrations
$$
\begin{tikzcd}
BV\times \Sec(\xi) \arrow{r} \arrow{d} & \hSec(\xi) \arrow{d} \\
BV \arrow{r} & \Map(BV,BV)_\idy
\end{tikzcd}
$$
the horizontal arrows are isomorphisms.

\subsection{Homotopy fixed points}\label{homotopy_fix} Let $G$ be a discrete group   and $X$ be a simplicial set with $G$-action, also called a $G$-space. The homotopy orbit space $X_{hG}$ is the quotient $(EG\times X)/G$ under the diagonal action. The homotopy fixed point space $X^{hG}$ is the simplicial subset  $\Map(EG,X)^G$  of $G$-equivariant simplicial set maps in $\Map(EG,X)$. Let $f:X\rightarrow Y$ be a map of $G$-spaces which is also a weak equivalence. Then the induced maps $X_{hG}\rightarrow Y_{hG}$ and $X^{hG}\rightarrow Y^{hG}$ are weak equivalences. 

 Next we describe a transitivity property of homotopy fixed points \cite[Lemma 10.5]{DW94}. Let $H$ be a normal subgroup of $G$. There is a natural action of $G/H$ on the mapping space $\Map(EG,X)^H$. 
 Then there is a weak equivalence
$$
X^{hG}\simeq (\Map(EG,X)^H)^{hG/H}  
$$
where $\Map(EG,X)^H\simeq X^{hH}$.

Let $\xi:E\rightarrow BG$ be a fibration with fiber $F$. Consider the pull-back diagram
$$
\begin{tikzcd} 
X_\xi\arrow{d}\arrow{r}&E\arrow{d}{\xi}\\
EG\arrow{r}&BG
\end{tikzcd}
$$
along the universal principal $G$-fibration.    Since $X_\xi$ is a free $G$-space, the natural map $h:(X_\xi) _{hG}\rightarrow (X_\xi) /G=E$ is a weak equivalence, in fact a trivial fibration.  There is a map of fibrations
$$
\begin{tikzcd}
X_\xi \arrow{r}\arrow{d}&F\arrow{d}\\
(X_\xi)_{hG}\arrow{d}{\tilde \xi}\arrow{r}{h}&E\arrow{d}{\xi}\\
BG\arrow[r,equal]&BG
\end{tikzcd}
$$
where $\tilde \xi$ is the composition $\xi \circ h$. We will usually switch from an arbitrary fibration to  the natural projection $(X_\xi) _{hG}\rightarrow BG$. As a consequence we have the following identifications
$$
(X_\xi) ^{hG}=\Map(EG, X_\xi )^G= \Sec(\tilde \xi)\simeq\Sec(\xi) .
$$
The equivalence $\Sec(\tilde \xi)\simeq\Sec(\xi)$ is a consequence of the fact that the map
$$
\Map(BG,Y) \rightarrow \Map(BG,Y')
$$
induced by a trivial fibration $Y\rightarrow Y'$ is also a trivial fibration \cite{GJ99}. Note that both spaces are fibrations over $\Map(BG,BG)$. Pulling back along the subspace $\Map(BG,BG)_\idy\rightarrow \Map(BG,BG)$ induces the required weak equivalence between the spaces of sections. 

\subsection{$hG$--equivalence}\label{hG}
Let $\xi_i:E_i \rightarrow BG$ for $i=1,2$ be two fibrations. A map $E_1\rightarrow E_2 $ of fibrations over $BG$ is called a fiber homotopy equivalence if there is a homotopy inverse over $BG$. Let $X_{\xi_i}$ denote the associated $G$--spaces. Then $f$ induces a map $X_{\xi_1}\rightarrow X_{\xi_2}$ which is a $G$--homotopy equivalence. Conversely one can start with two $G$--spaces $X_1$ and $X_2$ and compare the fibrations associated to the Borel constructions. In this case a weaker notion of equivalence is enough. We say $X_1$ and $X_2$ are $hG$--equivalent if there is a zig-zag of $G$--maps which are also weak equivalences. Given $hG$--equivalent $G$--spaces $X_1$ and $X_2$ the products $EG\times X_1$ and $EG\times X_2$ are $G$--equivalent. Therefore $(X_1)_{hG}$ is fiber homotopy equivalent to $(X_2)_{hG}$. This implies that there is a one-to-one correspondence between fiber homotopy classes of fibrations over $BG$ and $hG$--equivalence classes of $G$--spaces.

\subsection{Completion at a prime} \label{pre_completion}
Let $X^\wedge _p$ denote the Bousfield-Kan completion of $X$ at a prime $p$ as defined in  \cite{BK72}. 
It comes with a natural map $X\rightarrow X^\wedge_p$. A space is called $p$--complete if this map is a weak equivalence. For example, the classifying space $BP$ of a $p$-group is $p$--complete.
A map $f:X\rightarrow Y$ induces an isomorphism $\tilde{H}_*(f,\FF_p)$ if and only if its $p$--completion $f^\wedge_p:X^\wedge_p \rightarrow Y^\wedge_p$  is a weak equivalence. 
Note that any weak equivalence between $p-$completed spaces is a homotopy equivalence since the $p-$completion of a space is a fibrant simplicial set, i.e. a Kan complex.  
Let $\xi:E\rightarrow B$ be a fibration with fiber $F$. 
The $p$--completion $\xi^\wedge_p:E^\wedge_p \rightarrow B^\wedge_p$ is still a fibration.   The fiber lemma \cite[Chapter II 5.1]{BK72} implies that if $\pi_1 B$ is a $p$--group and $F$ is connected then the fiber of $\xi^\wedge_p$ is the $p$--completion of $F$.
There is also a relative version of the completion construction which applies to a fibration $\xi:E\rightarrow B$,  called the fiberwise completion at a prime $p$. We will denote the fiberwise $p$-completion  by $\xi^\wedge_{p/B}:E^\wedge_{p/B}\rightarrow B$. This is  a fibration whose fiber is given by $F^\wedge_p$. 
If $\xi$ is a fibration over $BP$ then up to homotopy $\xi^\wedge_p$ can be identified with $\xi^\wedge_{p/B}$. More explicitly, there is a map of fibrations
$$
\begin{tikzcd}
E^\wedge_{p/BP} \arrow{r} \arrow{d} & E^\wedge_{p} \arrow{d}\\
BP \arrow{r} & BP^\wedge_p
\end{tikzcd}
$$
where the horizontal maps are weak equivalences.

\begin{pro}\label{comp_cover} 
Let $\xi:E\rightarrow BP$ be a   fibration  with connected fiber  and    let $X=(X_\xi) _p ^\wedge$ denote the $p$-completion of the $P$-space associated to $\xi$. Then there is a fiber homotopy equivalence
$$
\begin{tikzcd}
X_{hP} \arrow{r}{\sim} \arrow{d} & E^\wedge_p \arrow{d} \\
BP \arrow[r,equal] & BP
\end{tikzcd}
$$ 
\end{pro}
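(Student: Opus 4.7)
The plan is to exhibit $X_{hP}\to BP$ as a fiberwise $p$-completion of $\xi$, and then invoke the identification $E^\wedge_{p/BP}\simeq E^\wedge_p$ over $BP$ recorded in \S \ref{pre_completion} (which applies because $BP$ is $p$-complete for the $p$-group $P$).

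By functoriality of the Bousfield-Kan completion, the $P$-action on $X_\xi$ extends to a $P$-action on $X=(X_\xi)^\wedge_p$, and the completion unit $\eta\colon X_\xi\to X$ is $P$-equivariant. Taking homotopy orbits and postcomposing with the trivial fibration $h\colon(X_\xi)_{hP}\xrightarrow{\sim}E$ from \S \ref{homotopy_fix} (available because $X_\xi$ is a free $P$-space with quotient $E$) produces a map $\phi\colon E\to X_{hP}$ over $BP$. On each fiber, $\phi$ is up to homotopy the $p$-completion $F\to F^\wedge_p$: indeed $X_\xi\simeq F$ because $EP$ is contractible, while the fiber of $X_{hP}\to BP$ is $X\simeq F^\wedge_p$.

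Next I would verify that $X_{hP}$ is itself $p$-complete. The fibration $X\to X_{hP}\to BP$ has connected fiber $X\simeq F^\wedge_p$ and base $BP$ with $\pi_1=P$ a $p$-group, so the fiber lemma cited in \S \ref{pre_completion} yields a fibration $X^\wedge_p\to(X_{hP})^\wedge_p\to BP^\wedge_p$. Since $X$ and $BP$ are already $p$-complete, comparing the two long exact sequences of homotopy groups shows that the canonical map $X_{hP}\to(X_{hP})^\wedge_p$ is a weak equivalence. Applying the fiber lemma now to $\xi$ itself and using that $\phi$ restricts on fibers to the $p$-completion, I conclude that $\phi^\wedge_p\colon E^\wedge_p\to(X_{hP})^\wedge_p\simeq X_{hP}$ is a weak equivalence; combined with the identification $E^\wedge_p\simeq E^\wedge_{p/BP}$ over $BP$, this produces the desired fiber homotopy equivalence.

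The main technical obstacle is the construction of $\phi$ as a genuine fiberwise $p$-completion. This rests on the equivariance of the Bousfield-Kan completion for simplicial group actions (standard by functoriality) and on the compatibility of this completion with passage to homotopy orbits under the fiber-lemma hypotheses. Once $\phi$ is a fiberwise $p$-completion, the remainder is a formal application of the fiber lemma together with the identification already recorded in \S \ref{pre_completion}.
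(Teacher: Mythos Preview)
Your argument is correct and takes a genuinely different route from the paper. The paper argues that $X=(X_\xi)^\wedge_p$ is itself a $P$-covering of $E^\wedge_p$: applying the fiber lemma to both $X_\xi\to EP$ and $E\to BP$ shows that the square obtained by $p$-completing the original pull-back square is again a homotopy pull-back, so $X$ is identified with the pull-back of $E^\wedge_p$ along $EP^\wedge_p\to BP^\wedge_p$; this is a free $P$-space with quotient $E^\wedge_p$, whence $X_{hP}\xrightarrow{\sim} X/P\simeq E^\wedge_p$. You instead work entirely on the level of homotopy orbits: the equivariant completion map $X_\xi\to X$ induces a map $E\simeq(X_\xi)_{hP}\to X_{hP}$ over $BP$ which is a fiberwise $p$-completion, and since the target is $p$-complete (by another application of the fiber lemma) this identifies $X_{hP}$ with $E^\wedge_p$. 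Your approach has the advantage of never needing to check that the functorial $P$-action on $(X_\xi)^\wedge_p$ is genuinely free; the paper's approach is a bit more direct once that covering-space statement is established. One small wording slip: you write ``postcomposing with the trivial fibration $h$'', but since $h\colon(X_\xi)_{hP}\to E$ points the wrong way you actually need to precompose $\eta_{hP}$ with a section (or homotopy inverse) of $h$ to produce $\phi\colon E\to X_{hP}$.
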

\begin{proof} This result is proved for an elementary abelian $p$--group in \cite[Proposition 4.3.1]{Lan75}. For a general $p$--group we proceed as follows. 
Let $X_\xi$ denote the pull-back of $\xi$ along $EP\rightarrow BP$. It fits into a fibration sequence $F\rightarrow X_\xi \rightarrow EP$. After completion the sequence $F^\wedge_p\rightarrow (X_\xi)^\wedge_p \rightarrow EP^\wedge_p$ is a fiber sequence since $EP^\wedge_p$ is still contractible. Then  the diagram
$$
\begin{tikzcd}
(X_\xi)^\wedge_p  \arrow{r}\arrow{d} & EP^\wedge_p \arrow{d} \\
 E^\wedge_p \arrow[r] & BP^\wedge_p
\end{tikzcd}
$$
is a homotopy pull-back diagram since $E^\wedge_p\rightarrow BP^\wedge_p$ is a fibration with fiber $F^\wedge_p$ by the fiber lemma. This implies that $X=(X_\xi)^\wedge_p$ is a $P$--covering of $E^\wedge_p$.
Thus $X$ is a free $P$--space and the desired map is given by the projection $X_{hP}\rightarrow X/P$.
\end{proof}

\subsection{Fiber joins} \label{pre_fibjoin}
Next we will discuss the fiber join construction. First we look at the behaviour with respect to Borel construction and $p$--completion, and then we study the Euler class of fiber joins. 
The fiber join of two fibrations $\xi_1:E_1\rightarrow B$ and $\xi_2:E_2\rightarrow B$ is defined to be the homotopy push-out of
\begin{equation}\label{def_fibjoin}
\begin{tikzcd}
E_1\times_B E_2 \arrow{r} \arrow{d} & E_1 \arrow{d} \\
E_2 \arrow{r} & E_1\join_BE_2
\end{tikzcd}
\end{equation}
where $E_1\times_B E_2$ is the pull-back of the maps $\xi_1$ and $\xi_2$ over $B$. When $B$ is a point this definition specializes to the join construction, and we simply write $E_1\ast E_2$.

\begin{pro}\label{Borel_join}
Let $X$ and $Y$ be $G$--spaces.
Then there is a fiber homotopy equivalence
$$
\begin{tikzcd}
X_{hG} \ast_{BG} Y_{hG} \arrow{r}{\sim} \arrow{d} & (X \ast Y)_{hG} \arrow{d} \\
BG \arrow[r,equal] & BG\ .
\end{tikzcd}
$$
\end{pro}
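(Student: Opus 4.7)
The plan is to exhibit the fiber homotopy equivalence by comparing two homotopy pushout constructions, one in $G$-spaces and one in spaces over $BG$, using the fact that the Borel construction $(-)_{hG} = EG\times_G(-)$ preserves homotopy colimits.

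First I would identify the fiber product that appears in the definition of the fiber join. A direct check shows that the map
\[
EG\times_G (X\times Y) \longrightarrow (EG\times_G X)\times_{BG}(EG\times_G Y),\qquad [e,x,y]\mapsto ([e,x],[e,y])
\]
is a homeomorphism over $BG$, where on the left we use the diagonal $G$-action. This is the standard observation that pulling back the two principal-bundle projections $EG\times_G X\to BG$ and $EG\times_G Y\to BG$ against each other recovers the diagonal Borel construction on $X\times Y$. Under this identification, the two structure maps $X_{hG}\times_{BG}Y_{hG}\to X_{hG}$ and $\to Y_{hG}$ correspond to the Borel constructions of the two $G$-equivariant projections $X\times Y\to X$ and $X\times Y\to Y$.

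Next I would use that the functor $(-)_{hG}$ sends homotopy pushouts of $G$-spaces to homotopy pushouts of spaces over $BG$. Indeed, $EG\times(-)$ is a left adjoint and hence preserves colimits, and passing to orbits by the resulting free diagonal $G$-action preserves weak equivalences of such diagrams. Applying this to the defining homotopy pushout
\[
X\ast Y \;=\; \mathrm{hocolim}\bigl(\, X \longleftarrow X\times Y \longrightarrow Y \,\bigr)
\]
of $G$-spaces yields
\[
(X\ast Y)_{hG}\;\simeq\;\mathrm{hocolim}\bigl(\, X_{hG} \longleftarrow (X\times Y)_{hG} \longrightarrow Y_{hG} \,\bigr)
\]
as spaces over $BG$. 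Combining with the first step, the right-hand side is precisely the defining homotopy pushout of $X_{hG}\ast_{BG} Y_{hG}$ in \ref{def_fibjoin}, which gives the asserted fiber homotopy equivalence over $BG$.

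The only subtle point is ensuring that the weak equivalence produced by the homotopy colimit comparison is in fact a \emph{fiber} homotopy equivalence, not merely a weak equivalence over $BG$. This follows because both sides are fibrations over $BG$ with fiber $X\ast Y$ (on the left by the join of fibers of the Borel fibrations, on the right by the Borel construction applied to $X\ast Y$), so a weak equivalence over $BG$ between fibrations is automatically a fiber homotopy equivalence (for instance by the standard cofibrant replacement argument, or by appealing to Dold's theorem on maps between fibrations that are weak equivalences on total spaces). The main technical obstacle is therefore the identification $X_{hG}\times_{BG}Y_{hG}\cong(X\times Y)_{hG}$ and the compatibility of the three structure maps of the pushout square, but this is a direct and standard verification.
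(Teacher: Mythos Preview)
Your proposal is correct and follows essentially the same route as the paper. The paper's proof is just two lines: it obtains the comparison map by applying the Borel construction to the natural maps $X\to X\ast Y\leftarrow Y$ (implicitly using, as you make explicit, the identification $(X\times Y)_{hG}\cong X_{hG}\times_{BG}Y_{hG}$ to match up the pushout diagrams), and then concludes by observing that the resulting map restricts to the identity on fibers. The only minor difference is in the last step: rather than appealing to Dold's theorem for weak equivalences between fibrations, the paper simply notes that the induced map on fibers is the identity $X\ast Y\to X\ast Y$, which immediately gives the fiber homotopy equivalence.
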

\Proof{
The map is induced by the Borel construction of the natural maps
$$
X \rightarrow X\ast Y \leftarrow Y
$$
and restricts to the identity map between the fibers.
}

Given two spaces $X,Y$ the join $X^\wedge_p \ast Y^\wedge_p$ is in general not $p$--complete   \cite[pg. 107]{Sul70}. 
For example, for spheres the $p$-completion of $(\sS^n)^\wedge_p \ast (\sS^m)^\wedge_p$ is given by $(\sS^n\ast \sS^m)^\wedge_p$. We use the following notation
$$
X[m]= (\underbrace{X\ast\cdots \ast X}_m)^\wedge_p.
$$
A fiberwise  version of this definition is as follows: Given a fibration $\xi:E\rightarrow B$ we define 
$$
E_{/B}[m]=(\underbrace{E \ast_B\cdots \ast_B E}_m)^\wedge_{p/B}
$$ 
and denote the associated fibration by $\xi[m]:E_{/B}[m]\rightarrow B$. 

\Cor{\label{not_equiv} 
Let $P$ be a $p$--group. If $\xi$ is the fibration $E\rightarrow BP$ associated to the Borel construction of a $P$--space $X$, then $\xi[m]$ is fiber homotopy equivalent to $(X[m])_{hP}$.
} 
\Proof{
Using Proposition \ref{Borel_join} and induction we see that there is a fiber homotopy equivalence
$$
\ast^n_{BP} E \rightarrow (\ast^n X)_{hP}
$$
over $BP$. Let $Y$ denote the pull-pack of the fiber join $\ast^n_{BP} E$ along $EP\rightarrow BP$. Then $Y$ is $G$--equivalent to $\ast^n X$. Applying Proposition \ref{comp_cover} to $\ast^n X$ we obtain a fiber homotopy equivalence
$$
((\ast^n X)^\wedge_p)_{hP} \rightarrow (\ast^n_{BP} E)^\wedge_p.
$$
} 

\Rem{\rm{\label{not_pro} It is useful to observe that the natural map $ X[mn]\rightarrow (X[m])[n]$ 
is a homotopy equivalence. Similarly $\xi[mn]$ is fiber homotopy equivalent to $(\xi[m])[n]$.
}}

\subsection{Euler class}  
We will study the Euler class of fiber joins of mod-$p$ spherical fibrations. Note that the fiber join construction does not result in a mod-$p$ spherical fibration until we fiberwise complete it at $p$. But the resulting fibration has a fiber whose mod-$p$ cohomology is the mod-$p$ cohomology of a sphere. 
Therefore for our purposes we consider a larger class of fibrations. Let   $\xi:E\rightarrow B$ be a fibration where the fiber $F$ satisfies $HF\cong H(\sS^d)$ for some $d$.
Thom space $\Th(\xi)$ of $\xi$ is defined to be the cofiber of the map $\xi$.  
 Consider  the diagram of cofibrations
$$
\begin{tikzcd}
F \arrow{r} \arrow{d} & E \arrow{d}{\xi}\\ 
\ast \arrow{r} \arrow{d} & B \arrow{d} \\
\Sigma F\arrow{r}{\mu} & \Th(\xi) \ .
\end{tikzcd}
$$
The reduced cohomology  ring $\tilde H(\Th(\xi))$ is a free $HB$--module generated on a generator $t(\xi)$ of degree $d+1$ called the Thom class of $\xi$. We can take $t(\xi)$ to be the dual of the image of a chosen generator under the map $H_{d+1}(\Sigma F)\rightarrow H_{d+1}(\Th(\xi))$ induced by $\mu$.
 The image of the Thom class under the natural map $H^{d+1}(\Th(\xi))\rightarrow H^{d+1}(B)$ is called the Euler class $e(\xi)$. Alternatively $e(\xi)$ is the image of the transgression of the generator of $H^d(F)$ in the Serre spectral sequence of $\xi$.  

We will need an  alternative description of fiber join construction. Given fibrations $\xi_i:E_i\rightarrow B$ with fiber $F_i$ where $i=1,2$ let us define a quotient space
$$E_1\hat\join E_2=E_1\times E_2\times [0,1]/\sim$$ 
by the relations $(e_1,e_2,0)\sim (e_1,e_2',0)$ if $\xi_2(e_2)=\xi_2(e'_2)$ and  $(e_1,e_2,1)\sim (e_1',e_2,1)$ if $\xi_1(e_1)=\xi_1(e'_1)$. Projecting onto each factor induces   a fibration $\xi_1 \hat \join \xi_2: E_1\hat\join E_2 \rightarrow B\times B$ with fiber  $F_1\join F_2$. The pull-back along the diagonal inclusion $\Delta:B\rightarrow B\times B$ is exactly the fiber join construction
$$
\begin{tikzcd}
F_1\join F_2 \arrow[r,equal] \arrow{d} & F_1\join F_2\arrow{d}\\
 E_1\join_BE_2 \arrow{r}\arrow{d}& E_1\hat\join E_2 \arrow{d}{\xi_1 \hat \join \xi_2} \\
B\arrow{r}{\Delta} &B\times B
\end{tikzcd}
$$
In more details, the diagram \ref{def_fibjoin} maps to the pull-back and  induces a weak equivalence between the spaces given in the two definitions.

\begin{pro}\label{fiberjoin} Let $\xi_1$ and $\xi_2$ be mod-$p$ spherical fibrations over $B$.
In the commutative diagram
$$
\begin{tikzcd}
\Sigma(F_1\ast F_2) \arrow{r}{\sim} \arrow{d} & \Sigma F_1\sm \Sigma F_2 \arrow{d}{\Sigma\mu_1\wedge\Sigma\mu_2} \\
\Th(\xi_1 \hat \join \xi_2) \arrow{r}{\sim} & \Th(\xi_1)\sm\Th(\xi_2)
\end{tikzcd}
$$
 the horizontal maps are weak equivalences.
\end{pro}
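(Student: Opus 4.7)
The plan is to prove the two horizontal weak equivalences separately and then argue commutativity from the construction. For the top arrow, I would appeal to the classical identification $X \ast Y \simeq \Sigma(X \wedge Y)$ valid for CW complexes (or well-pointed spaces): applying this to $F_1 \ast F_2$ and using the standard shuffle $\Sigma(X \wedge Y) \simeq \Sigma X \wedge Y \simeq X \wedge \Sigma Y$ one gets $\Sigma(F_1 \ast F_2) \simeq \Sigma\Sigma(F_1 \wedge F_2) \simeq \Sigma F_1 \wedge \Sigma F_2$. The map is natural and the equivalence respects the suspension coordinates used to form $\mu_1$ and $\mu_2$.

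For the bottom arrow, the strategy is to recognize $\Th(\xi_1 \hat\join \xi_2)$ via the pushout/double-mapping-cylinder description of the external fiber join. Since $E_1 \hat\join E_2$ is (weakly) the homotopy pushout of $E_1 \times B_2 \xleftarrow{\idy \times \xi_2} E_1 \times E_2 \xrightarrow{\xi_1 \times \idy} B_1 \times E_2$, the cofiber of the total map $\xi_1 \hat\join \xi_2 : E_1 \hat\join E_2 \to B_1 \times B_2$ can be computed by an iterated cofiber argument: first note that $E_1\times B_2 \cup B_1 \times E_2$ (pushed out along the common $E_1\times E_2$) sits inside $B_1\times B_2$ and the natural collapse $E_1 \hat\join E_2 \to E_1 \times B_2 \cup_{E_1\times E_2} B_1 \times E_2$ over $B_1\times B_2$ induces an equivalence of cofibers. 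The cofiber of the latter inclusion into $B_1 \times B_2$ is by definition $(B_1 \times B_2)/(E_1 \times B_2 \cup B_1 \times E_2)$, which is precisely the classical identification of $\Th(\xi_1) \wedge \Th(\xi_2)$ using quotient models of the Thom spaces.

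Commutativity of the square is then built into the construction: the cofiber sequence $F_i \to E_i \to B_i$ yields the map $\mu_i : \Sigma F_i \to \Th(\xi_i)$ from the natural inclusion of a fiber into the total space, and both the top and bottom equivalences are defined by the same cylinder/join coordinates, so restricting the bottom equivalence to a fiber reproduces the top one after a smash. One can also verify the commutativity on cohomology using the Thom isomorphism and Künneth, which gives independent confirmation.

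The main technical obstacle will be the equivalence $\Th(\xi_1 \hat\join \xi_2) \simeq \Th(\xi_1) \wedge \Th(\xi_2)$, since $E_1 \hat\join E_2$ is only the homotopy pushout, not a genuine subspace of $B_1 \times B_2$, so one must be careful in passing between the mapping-cone model and the quotient model of each Thom space. This can be handled by replacing each $\xi_i$ by a cofibration via its mapping cylinder and checking that both the external join construction and the smash product respect this replacement up to weak equivalence; alternatively, one can invoke the cube lemma for homotopy pushouts to reduce the whole comparison to the (obvious) fiberwise statement from Part~(1).
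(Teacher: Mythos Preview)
Your proposal is correct and lands on essentially the same argument as the paper, just organized differently. The paper works directly with the fiberwise cone $D(\xi)$ (the fiber join of the identity fibration $B\to B$ with $\xi$, i.e.\ the mapping cylinder of $\xi$) and the quotient description $\Th(\xi)=D(\xi)/E$; it then writes $\Th(\xi_1)\wedge\Th(\xi_2)$ as the explicit quotient $\bigl(D(\xi_1)\times D(\xi_2)\bigr)\big/\bigl(E_1\times D(\xi_2)\cup D(\xi_1)\times E_2\bigr)$ and observes that the subspace being collapsed is precisely $E_1\hat\ast E_2$ sitting inside a model for $B_1\times B_2$. This is exactly the ``replace each $\xi_i$ by a cofibration via its mapping cylinder'' step you propose in your final paragraph, carried out from the start, so the technical obstacle you flag simply does not arise. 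Your middle paragraph, phrased via the strict pushout $E_1\times B_2\cup_{E_1\times E_2}B_1\times E_2$ viewed as a ``subspace'' of $B_1\times B_2$, is not quite right as written (as you yourself note, that map is not an inclusion); the clean version is the iterated/total cofiber of the square $E_1\times E_2\to E_1\times B_2\to B_1\times B_2$ and $E_1\times E_2\to B_1\times E_2\to B_1\times B_2$, which yields $\Th(\xi_1)\wedge\Th(\xi_2)$ directly and is equivalent to the paper's quotient computation.
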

\Proof{ Let $D(\xi)$ denote the  fiber join of the identity map $B\rightarrow B$ (regarded as a fibration) with a mod-$p$ spherical fibration $\xi:E\rightarrow B$. The Thom space $\Th(\xi)$ can be described as the quotient $D(\xi)/B$.
We have the following identifications
$$
\Th(\xi_1)\sm\Th(\xi_2)=\frac{\Th(\xi_1) \times \Th(\xi_2)}{\Th(\xi_1)\vee\Th(\xi_2)} = \frac{D(\xi_1)\times D(\xi_2)}{(E\times D(\xi_2))\cap (D(\xi_1)\times E )} \simeq \frac{D(\xi_1\hat\join\xi_2)}{\xi_1\hat\join\xi_2} =T(\xi\hat\join\xi)
$$
which is compatible with the equivalence $\Sigma F_1\sm \Sigma F_2 \simeq \Sigma(F_1\ast F_2)$.
}

An almost immediate consequence of Proposition \ref{fiberjoin} is that the Euler class of the fiber join of two mod-$p$ spherical fibrations is the cup product of Euler classes of the individual fibrations.  

\Cor{\label{prod_Euler} If $\xi_1$ and $\xi_2$ are mod-$p$ spherical fibrations, then
$$
e(\xi_1\join_B \xi_2)=e(\xi_1)e(\xi_2).
$$
}
\begin{proof}  Proposition \ref{fiberjoin} implies that the Thom class of $\xi_1 \hat \join \xi_2$ is  the cross product of the Thom classes of $\xi_1$ and $\xi_2$. Looking at the corresponding diagram of cohomology groups associated to the diagram   
$$
\begin{tikzcd}
B\arrow{r}{\Delta} \arrow{d} & B\times B \arrow{d}\\
\Th(\xi_1\join_B\xi_2) \arrow{r} & \Th(\xi_1)\sm\Th(\xi_2)\ .
\end{tikzcd}
$$
We see that the Euler class of $\xi_1\join_B \xi_2$ is the cup product of the Euler classes of $\xi_1$ and $\xi_2$. 
\end{proof}
 }

\bibliographystyle{plain}

\end{document}